\documentclass[11pt, reqno]{amsart}
\usepackage[utf8]{inputenc}
\numberwithin{equation}{section}
\usepackage[usenames, dvipsnames]{color}
\usepackage[shortlabels]{enumitem}
\usepackage[hidelinks]{hyperref}
\usepackage{cleveref}
\usepackage{comment}
\usepackage{todonotes}
\newcommand{\qtq}[1]{\quad\text{#1}\quad}
\usepackage{mathabx}
\usepackage{amsmath, amssymb, amsfonts, amsthm, mathtools}
\usepackage{dsfont}
\usepackage[scr=boondoxo]{mathalfa}
\usepackage{exscale}
\usepackage{cite}
\usepackage{epsfig}
\usepackage{amscd}
\usepackage{graphics}
\usepackage{listings}

\usepackage{lineno}

\newcommand{\R}{\mathbb{R}}

\newcommand{\T}{\mathbb{T}}
\newcommand{\C}{\mathbb{C}}
\newcommand{\N}{\mathbb{N}}
\newcommand{\Z}{\mathbb{Z}}

\usepackage{amsmath,amssymb,amsthm}

\newtheorem{thm}{Theorem} 
\newtheorem{cor}[thm]{Corollary}
\newtheorem{conj}[thm]{Conjecture}
\newtheorem{pro}[thm]{Proposition}
\newtheorem*{claim*}{Claim}

\newtheorem{lem}[thm]{Lemma}
\newtheorem{remark}[thm]{Remark}

\theoremstyle{remark} 

\theoremstyle{definition} \newtheorem{dfn}[thm]{Definition}

\numberwithin{equation}{section} \numberwithin{thm}{section}

\newcommand{\eps}{\varepsilon}

\DeclareMathOperator{\F}{\mathcal{F}}
\DeclareMathOperator{\K}{\mathcal{K}}

\newcommand{\vphi}{\varphi}

\newcommand{\bone}{\mathds{1}}

\newcommand{\jbrak}[1]{\langle#1\rangle}

\renewcommand{\d}{\mathrm{d}}

\newcommand{\bbar}{\overline}

\renewcommand{\tilde}{\widetilde}
\renewcommand{\hat}{\widehat}

\renewcommand{\d}{\mathrm{d}}

\newcommand{\dx}{\, \mathrm{d}x}

\newcommand{\dt}{\, \mathrm{d}t}
\newcommand{\dr}{\, \d r}

\newcommand{\dtau}{\, \mathrm{d}\tau}

\newcommand{\cQ}{\mathcal{Q}}

\newcommand{\cD}{\mathscr{D}}

\newcommand{\defe}{\overset{\mathrm{def}}{=}}

\newcommand{\supp}[1]{\mathrm{supp}(#1)}
\renewcommand{\rm}[1]{\mathrm{#1}}

\newcommand{\bk}{\mathbf{k}}

\newcommand{\lo}{\mathrm{lo}}

\newcommand{\hi}{\mathrm{hi}}

\begin{document}

\title[Periodic stationary waves for dispersion-managed NLS]{On analyticity of solitons for the periodic dispersion-managed Nonlinear Schr\"odinger Equation}

\author[S. Herr]{Sebastian Herr} \address[S. Herr]{Fakult\"at f\"ur
  Mathematik, Universit\"at Bielefeld, Postfach 10 01 31, 33501
  Bielefeld, Germany}
\email{herr@math.uni-bielefeld.de}

\author[D. Hundertmark]{Dirk Hundertmark} \address[D. Hundertmark]{Fakult\"at f\"ur Mathematik, Karlsruher Institut f\"ur Technologie, 76128 Karlsruhe, Germany}
\email{dirk.hundertmark@kit.edu}

\author[J.L. Marzuola]{Jeremy L. Marzuola} \address[J.L. Marzuola]{Department of Mathematics, University of North Carolina, Chapel Hill, NC, 27599}
\email{marzuola@math.unc.edu}

\author[T. Van Hoose]{Tim Van Hoose} \address[T. Van Hoose]{Department of Mathematics, University of North Carolina, Chapel Hill, NC, 27599}
\email{tvh@unc.edu}

\begin{abstract}

We explore the regularity of energy maximizers for the Lagrangian of a periodic dispersion managed fiber optic at fixed intensity, on a torus of length $L$ and with vanishing average dispersion.  We show that the Fourier coefficients decay at a polynomial rate, and then upgrade this to exponential decay, so that the maximizers are analytic in space for large enough $L$.  In addition, by an asymptotic comparison to the optimizers on the real line, we prove that the solutions are non-trivial.  We also consider a conjecture that the maximizer necessarily has an underlying symmetry inherent to both the energy functional and the resulting Euler-Lagrange equation.  All of our results are supported with illustrative numerical experiments.  
\end{abstract}

\subjclass[2000]{} \keywords{}

\maketitle

\section{Introduction}\label{sect:intro}
\noindent
Consider the following one-dimensional nonlinear Schr\"odinger equation: 
    \begin{equation}\label{eq:disp-man-nls}
        iv_t+d(t)v_{xx}+C|v|^2v=0 \qtq{on} \R_t \times \T_{L}, \ \ v(x,0) = v_0 \in L^2 (\T_{L})
    \end{equation}
where $\T_L = \R /(L\Z)$ is the torus of length $L$. Solutions to this equation naturally arise in the study of finite fiber optics, where the dispersion of the cable is periodically varied along its length, which is encoded in the function $d(t)$.  Indeed, in the derivation of this formulation from a paraxial approximation, the time variable $t$ in \eqref{eq:disp-man-nls} is related to spatial location in the fiber, while our spatial variable $x$ corresponds to the time component.  We are actually interested in more than just a generic dispersion $d(t)$; our main interest will be in the regime of \emph{strong dispersion management}, for which we will take the dispersion
\begin{equation}
    d(t) = d_{\rm{av}} + \frac{1}{\eps}d_0\left(\frac{t}\eps\right),
\end{equation}
where $d_{\rm{av}}$ is the average of $d(t)$ over one period (which we generically take to be length $1$), and the function $d_0$ has mean zero over its period. Inserting this choice into the equation \eqref{eq:disp-man-nls} and changing variables, we arrive at the envelope equation 
\begin{equation}\label{eq:disp-man-nls-env}
iv_t+d_0(t)v_{xx}+\eps(d_{av}v_{xx}+|v|^2v)=0.
\end{equation} 
In what follows, we will assume that $d_0(t)$ is piecewise constant and $1$-periodic, and equal to $1$ on $[0, \tfrac12]$ and equal to $-1$ on $[\tfrac12, 1]$.

With our specific choice of $d_0$, an averaging process (detailed in \cite[Section 2]{CHL23}, \cite{gabitov1996averaged, gabitov1996breathing}) yields the Gabitov–Turitsyn equation
\begin{equation}\label{eq:av-nls}
iv_t+\eps d_{\rm{av}}v_{xx}+\eps Q(v)=0,
\end{equation}
where $T_r=e^{ir\partial_x^2}$, $q(v) = |v|^2 v$ and
\begin{equation}\label{eq:Q}
Q(v)(t,x)=\int_0^{1}T_r^{-1}(q(T_rv))(t,x) dr,
\end{equation}
which comes from averaging \eqref{eq:disp-man-nls-env}.
 In the periodic case, such a derivation was recently carried out in the dissertation \cite{adekoya2019periodic}.

In this work, we study the existence and regularity of periodic dispersion managed solitary waves, which are spatially periodic solutions (with period $L > 0$) of the form $e^{it\omega} u(x)$.

At this point, we will state our main theorems somewhat informally, and postpone a rigorous statement for the sequel. Our results are as follows:
\begin{thm}[Informal]
    Set $d_{\rm{av}} = 0$, $\omega>0$, and let $L >0$ be sufficiently large. The spatial Fourier modes of stationary solutions to \eqref{eq:av-nls}
    decay exponentially. In particular, such solutions are analytic in space.  Further, for large enough $L$ the maximizers are nontrivial. 
\end{thm}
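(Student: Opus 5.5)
Stationary solutions $e^{i\omega t}u$ of \eqref{eq:av-nls} with $d_{\rm av}=0$ solve (after absorbing $\eps$ into $\omega$) the Euler--Lagrange equation $\omega u = Q(u)$. We will obtain them as maximizers of
\[
H(f)=\int_0^1\!\!\int_{\T_L}|T_rf|^4\dx\,\d r
\]
under the constraint $\|f\|_{L^2(\T_L)}^2=\lambda$. The plan therefore has three parts: existence and nontriviality, polynomial decay of Fourier coefficients, and an upgrade to exponential decay.

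\textbf{Setup in Fourier variables.} Write $f=\sum_{k\in\frac{2\pi}{L}\Z}\hat f(k)e^{ikx}$. The $r$-integral produces a phase factor, so
\[
H(f)=\sum_{k_1+k_2=k_3+k_4}\bar{\hat f}(k_1)\bar{\hat f}(k_2)\hat f(k_3)\hat f(k_4)\,m(\Delta),
\]
where $\Delta=k_1^2+k_2^2-k_3^2-k_4^2$ and $m(\Delta)=\int_0^1e^{-ir\Delta}\,\d r$. The key point is that $|m(\Delta)|\lesssim\langle\Delta\rangle^{-1}$. On the resonant set $k_1+k_2=k_3+k_4$ one has $\Delta=2(k_1-k_3)(k_1-k_4)$.

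\textbf{Existence and nontriviality.} Existence of a maximizer is immediate, because the unit sphere of $\ell^2$ restricted to the finitely many relevant modes is compact. More precisely, $H$ is weakly continuous on $\ell^2(\frac{2\pi}L\Z)$ thanks to the decay of $m$. Compare this with the constant function, which gives the value $\lambda^2/L$. By contrast, rescaling the known Gaussian-decaying line optimizer of the Gabitov--Turitsyn functional on $\R$ and periodizing it gives a value close to the line supremum $\sim c\lambda^2$, independent of $L$. Hence for $L$ large the maximizer strictly beats the constant, and so it is nontrivial.

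\textbf{Polynomial decay.} Take the Euler--Lagrange equation on the Fourier side,
\[
\omega\hat u(k)=\sum_{k_1+k_2-k_3=k}m(\Delta)\,\hat u(k_1)\hat u(k_2)\bar{\hat u}(k_3),
\]
and run the bootstrap that Erdo\u{g}an--Hundertmark--Lee used on $\R$. Split each interaction into the part where all $|k_j|\le\mu|k|$ are small and the rest. In the low region, if $|k|$ is large then $|\Delta|\gtrsim|k|^2$ unless the frequencies are nearly degenerate, which gains a factor $|k|^{-2}$. In the high region, at least one input frequency is large and carries the weight being bootstrapped. Iterating on weighted $\ell^2$ tails $\sum_{|k|>K}|\hat u(k)|^2$ then yields $|\hat u(k)|\lesssim\langle k\rangle^{-N}$ for every $N$. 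Working on $\ell^2(\frac{2\pi}L\Z)$, I expect bilinear sums with the $\langle\Delta\rangle^{-1}$ weight to be controlled by a discrete analogue of the Strichartz estimate $\|T_rf\|_{L^4_{r,x}}\lesssim\|f\|_{L^2}$. For $r\in[0,1]$ this is available from Bourgain's $L^4$ estimate on $\T_L$, with constants uniform for $L$ large.

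\textbf{Exponential decay (main obstacle).} Consider $F_{\mu,\eps}(k)=e^{\mu|k|/(1+\eps|k|)}|\hat u(k)|$. The weight is bounded for fixed $\eps$, and it is submultiplicative along $k=k_1+k_2-k_3$, giving $w(k)\le w(k_1)w(k_2)w(k_3)$. Inserting this into the Euler--Lagrange equation gives a bound of the form
\[
\|F\|\le C\|F_{\lo}\|+C\,\delta(K)\,\|F\|^{3}.
\]
Here $\delta(K)\to0$ comes from the gain of $m$ at high frequency, using that $|\Delta|$ is large except on degenerate configurations. The degenerate configurations are $k_1\approx k_3$ or $k_1\approx k_4$, where the output equals one input and there is no decay gain. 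These must be handled separately via the polynomial decay together with the $\omega\hat u$ term, for which $\omega>0$ is essential. This degenerate-configuration estimate is the step I expect to be hardest. The mechanism is that the degenerate terms behave like $|\hat u(k)|\,\|u\|^2$ times a factor small in $K$, and I will try to absorb them into the left side.

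A continuity argument in $\eps\downarrow0$, with $\mu$ chosen small, then gives a uniform bound and hence $|\hat u(k)|\lesssim e^{-\mu|k|}$. Analyticity in a strip $|\mathrm{Im}\,x|<\mu$ follows.
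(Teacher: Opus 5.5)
Your overall architecture matches the paper's: weighted norms built from $F_{\mu,\eps}$, submultiplicativity of the weight along $k=k_1+k_2-k_3$, a continuity argument in $\eps$, and nontriviality by periodizing the line maximizer and comparing with the constant's value $L^{-1}$. But the step you flag as hardest rests on a false mechanism, and as a result your decay argument never uses that $L$ is large. On $\T_L$ there is no gain $\delta(K)\to0$ at high frequency.

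In the normalization of \eqref{eq:fourier-eq}, the resonant terms ($\Phi=0$, i.e.\ the output equals an input) sum \emph{exactly} to
\[
L^{-1}\big(2\|f\|_{L^2}^2-|\hat f(k)|^2\big)\hat f(k).
\]
This is a linear term whose coefficient is $\approx 2/L$ for mass $1$. It is independent of $|k|$, and no a priori decay of $f$ makes it smaller. Equivalently, the four-linear form does not decay with separation: for $\hat f_1=\hat f_2=\delta_N$ and $\hat f_3=\hat f_4=\delta_0$ one has $\cQ(f_1,f_2,f_3,f_4)=L^{-1}$ for every $N$. Correspondingly, the torus bilinear estimate of Theorem~\ref{T:MV-Ltorus} saturates at $\Delta=L^{-1}$ once the separation exceeds $L^2$.

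So these terms can only be absorbed into $\omega\hat f(k)$ under a quantitative comparison between $\omega$ and a negative power of $L$; $\omega>0$ alone is not enough. In the paper this comparison comes from two pieces. Proposition~\ref{P:sepsupportsbound} gives a gain $c_0\Delta_L(\delta)^{1/2}$ that bottoms out at $c_0L^{-1/2}$. The hypothesis \eqref{eq:cond-L-exp}, $c_0L^{-1/2}\le\frac12\min(\frac{\omega}{2},g_0(\omega))$, then closes the argument. For $\omega=\omega_1(L)$ this hypothesis holds for large $L$ precisely because $\liminf_{L\to\infty}\omega_1(L)\ge\omega_1^*>0$ (Proposition~\ref{pro:unif-lower}).

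Also, your schematic inequality puts the small factor in the wrong place. In the actual consistency estimate the cubic and quadratic terms carry no small factor; for instance, the all-high interaction has no separation at all. The smallness multiplies the linear and inhomogeneous terms. It is the continuity argument starting from $\eps=1$ that keeps $\|f_>\|_{\mu,\eps}$ on the small branch of $G_\omega$.

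The same floor invalidates your polynomial-decay claim. Each triadic step of the bootstrap contracts at best by a factor comparable to $L^{-1/2}/\omega$. This yields only the finite exponent $\log_3(\frac{\omega}{2K}L^{1/2})$ of Theorem~\ref{thm:poly-decay}, with $K$ the constant of Lemma~\ref{L:selfcons}, not $\langle k\rangle^{-N}$ for every $N$. The paper's exponential step does not need polynomial decay at all; it only uses $\|f_>\|_{L^2},\|f_\sim\|_{L^2}\to0$ as $\tau\to\infty$.

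Your existence argument is also wrong: $H$ is not weakly continuous on $\ell^2$. Indeed $e_L(n\,\cdot)\rightharpoonup0$ while $H(e_L(n\,\cdot))=L^{-1}$. Moreover, frequency shifts $\hat f\mapsto\hat f(\cdot-n)$ leave $H$ invariant, so maximizing sequences can escape to infinity in frequency, and there are no ``finitely many relevant modes''. Existence needs a compactness argument modulo these symmetries. The paper does not prove it and takes it from \cite{adekoya2022maximisers}.

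A minor point: the line maximizer is known to decay exponentially, not like a Gaussian. Exponential decay is all the periodization argument needs.
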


We will now give a more in-depth sketch of the context of our main results in the literature, as well as the method of proof.  Recall that in the paraxial limit, the variable that plays the role of $x$ in \eqref{eq:disp-man-nls}-\eqref{eq:av-nls} is actually time.  Hence, we are considering time-periodically generated signals across an array of periods.

In \cite{EHL11}, it is shown that stationary solutions to \eqref{eq:disp-man-nls} with $v \in L^2 (\R)$ decay exponentially in both space and frequency; the non-triviality of such objects is to some extent trivial since constants are not in $L^2 (\R)$.  For the case where the nonlinearity is quintic ($q(v) = |v|^4 v$) and the integration domain is infinite ($r \in [0,\infty)$), stationary solutions are the optimizers of the Strichartz inequality and are known to be Gaussians in $1$ dimension, see \cite{foschi2007maximizers,hundertmark2006sharp}.  In the cubic case and a finite time interval $r \in [0,1]$, the stationary states are known to have oscillatory tails, see \cite{lushnikov2004oscillating}.

To be more explicit, for $u \in L^2(\R)$, the objects of study are stationary solutions to the equation 
\begin{equation}\label{E:0av-weak-dmnls}
    \omega u = Q(u),
\end{equation}
where $Q(u) = Q(u,u,u)$ is a nonlocal trilinear operator defined by 
\begin{equation*}
    Q(f,g,h) = \int_0^1 T_r^{-1} (T_r f \bbar{T_r g} T_r h) \dr,
\end{equation*}
where $T_r = e^{ir\partial_x^2}$ is the free Schr\"odinger propagator. We notice that the Lagrange multiplier $\omega$ naturally scales with the mass $\|u\|_{L^2}^2$ of the solution. That is to say, if we consider solutions to \eqref{E:0av-weak-dmnls} with mass $\lambda$, the corresponding value of $\omega$ is given by $\omega = \omega_1 \lambda$, where $\omega_1$ is given as the extremizer of an optimization problem over functions of mass $1$. 

For the case $v(t,\cdot) \in L^2 (\R)$ for all $t$, several recent works have analyzed solutions of \eqref{eq:disp-man-nls}, \eqref{eq:av-nls}.  Indeed, the time dependent equation \eqref{eq:av-nls} has seen significant recent progress in this setting.  In the case when $d_{\rm av} \neq 0$, see for instance \cite{murphy2022modified,murphy2024note,choi2022averaging,choi2024continuum,choi2025scattering,kawakami2024small,kowalski2025ill}.  For both time independent and dependent results in higher spatial dimensions, see \cite{choi2025ground,campos2024averaging}.

The closest related works to ours consider analysis of solutions to \eqref{E:0av-weak-dmnls} with $u \in L^2 (\R)$, i.e. solitary wave solutions with zero average dispersion ($d_{\rm av} = 0$).  This problem has been studied significantly as well, see \cite{CHL23,EHL11,ZGJT-cc,hundertmark2015stability,HLRZ17,HLRZ18,HL-var,HL12b,HL09}. See also the work of \cite{green2016exponential}, which extends the exponential decay results to $d_{\rm av} \geq 0$.  The methods developed in \cite{HL09} and \cite{EHL11} explore a form of ellipticity present in the multilinear interactions given by $Q(u)$ that we will exploit here as well.  

On the torus, we perform a similar analysis; we solve the same problem:
\begin{equation}\label{E:0av-dmnls-periodic}
    \omega f = Q(f,f,f) \qtq{on} \T_L,
\end{equation}
and again note that the Lagrange multiplier $\omega$ still scales with the mass of $f$, but also depends implicitly on the length of the torus $L$. As a result, we will normalize our solutions to always have fixed $L^2(\T_L)$-norm equal to $1$, and work with $\omega = \omega_1(L)$, which is uniformly bounded in $L$.  Our main contribution is to utilize the spatial scale $L$ of the torus to get precise bilinear estimates in the periodic setting to give $L$ dependent polynomial decay bounds in frequency.  

Our method will be to look for maximizers to the $L^4([0,1] \times \T_L)$ Strichartz norm of the form 
\begin{equation}\label{maxdef}
   \omega_1 (L) = \max_{\|f\|_{L^2(\T_L)} = 1} \cQ(f,f,f,f)
\end{equation}
where the functional $\cQ(f,f,f,f)$ is defined by 
\begin{equation}\label{E:qdef}
    \cQ(f_1, f_2, f_3, f_4) = \int_0^1 \int_{\T_L} \bbar{T_r f_1} T_r f_2 \bbar{T_r f_3} T_r f_4 \dx \dr.
\end{equation}

As we will need it later, we let $\omega_1^*$ be the corresponding constant on the real line, defined as
\begin{equation}\label{maxdefR}
   \omega_1^* = \max_{\|f\|_{L^2(\R)} = 1} \cQ(f,f,f,f),
\end{equation}
where here and in Section \ref{sec:largeL} $\cQ$ denotes the corresponding expression on $\R$ instead of $\T_L$,
which is finite, obviously positive, and the maximum is attained, see \cite{HL09} for instance.

In the periodic case, using the convention that
\begin{equation}
    \label{eqn:FTdef}
    f(x) =\sum_{k \in \mathbb{Z}} \hat{f} (k) e_L(kx)
\end{equation}
for {the orthonormal $e_L(x) = e^{\frac{2 \pi i x}{L}}/\sqrt{L}$},
we note that in terms of the Fourier coefficients $\hat{f_j}$ we have 
\begin{equation}\label{E:qdefFourier}
    \cQ(f_1, f_2, f_3, f_4) 
      = L^{-1}\sum_{\substack{\bk\in\Z^4\\k_1-k_2+k_3-k_4=0}}
        \!\!\!\!\!\!\!\int_0^1 
          e^{\frac{4\pi^2 i\Phi(\bk)r}{L^2}} 
        \dr \!\
         \bbar{\hat{f_1}(k_1)} \hat{f_2}(k_2) 
        \bbar{\hat{f_3}(k_3)} \hat{f_4}(k_4)
\end{equation}
with the phase function $\Phi(\bk)= k_1^2-k_2^2+k_3^2-k_4^2$.

Existence of spatially periodic solutions to \eqref{eq:disp-man-nls} was recently considered in \cite{adekoya2022maximisers}.  There, in Theorem $6.2$, the authors establish that given $T=1$, there exists a $C \approx 2.6$ such that for all $L \geq \frac{2 \pi}{C} \approx 3.9$ maximizers exist for any mass.

Note that $\omega_1(L)=O(1)$. Indeed, by definition  $\cQ(f,f,f,f)$ is  the fourth power of the  $L^4_{r,x}([0,1]\times\T_L)$-norm of $T_r f$, and since we can write 
\begin{equation*}
    \int_0^1 \int_{\T_L} |T_r f|^4 \dx \dr = \int_0^1 \int_{\T_L} |T_r f \bbar{T_r f}|^2 \dx \dr,
\end{equation*}
the bound 
$\cQ(f,f,f,f)\lesssim \|f\|_{L^2}^4$ follows from using the third case of the bilinear estimate \Cref{T:MV-Ltorus} below.
It will be convenient to rewrite the stationary equation on the Fourier side.
Testing \eqref{E:0av-dmnls-periodic} against a function $g=e_L(k\cdot)$ 
and using that
 \begin{equation}
      \langle e_L (k x), f \rangle = \hat{f}(k) \ \text{for} \ e_L(x) = L^{-\frac{1}{2}} e^{  \frac{2 \pi ix}{L} }
 \end{equation}
 as we will explain below in Section \ref{S:notation},
 we find that
 
\begin{equation}
\label{eq:fourier-eq}
    \omega(L)\hat{f}(k) = \frac{1}{L}\sum_{\substack{k_2,k_3,k_4 \\ k-k_2+k_3-k_4=0}}  \!\!\!\!\int_0^1 
                      e^{\frac{4\pi^2 i\Phi(k, k_2, k_3, k_4)r}{L^2}} 
                    \dr  
            \hat{f}(k_2) \bbar{\hat{f}(k_3)} \hat{f}(k_4).
\end{equation}

    With these definitions and context in hand, we can now state our theorems more precisely. First, we show
    that solutions have polynomial decay, at an order that grows as $L$ grows sufficiently large:
    \begin{thm}\label{thm:poly-decay}
        For any $L$ sufficiently large, the Fourier modes $\hat{f}(k)$ of the mass-$1$ maximizer of \eqref{maxdef} obey the decay estimate
        \begin{equation*}
            |\hat{f}(k)| \lesssim_L |k|^{-\log_3(\frac{\omega}{2K} L^{\frac{1}{2}})} \text{ for all } |k|>s_0,
        \end{equation*}
        for some $s_0=s_0(f,L)$, where $K > 0$ is a constant independent of $L$ and $\omega=\omega_1(L)$.
    \end{thm}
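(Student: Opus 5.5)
We prove the bound by a bootstrap on dyadic-type tail norms of $\hat f$, following the strategy of \cite{EHL11,HL09}. We use the Euler--Lagrange equation \eqref{eq:fourier-eq} together with the bilinear estimate \Cref{T:MV-Ltorus}. For $s>0$ we set
\begin{equation*}
a(s) = \Big(\sum_{|k|>s}|\hat f(k)|^2\Big)^{1/2},
\end{equation*}
which tends to $0$ as $s\to\infty$ since $f\in L^2(\T_L)$. The goal is a self-improving inequality of the form
\begin{equation*}
a(3s) \le \frac{2K}{\omega L^{1/2}}\, a(s) + (\text{higher order in } a(s)) ,
\end{equation*}
valid for $s>s_0$. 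Iterating along $s_n=3^n s_0$ then gives $a(3^n s_0)\lesssim \big(\tfrac{2K}{\omega}L^{-1/2}\big)^{n}$, which is $|k|^{-\log_3(\frac{\omega}{2K}L^{1/2})}$ at $|k|\sim 3^n s_0$. Since $|\hat f(k)|\le a(|k|/2)$, this yields the pointwise bound. We need $L$ large so that $\frac{\omega}{2K}L^{1/2}>1$, using that $\omega_1(L)$ stays bounded below. A lower bound on $\omega_1(L)$ can be obtained by testing with a fixed bump, or it will follow from the comparison with $\omega_1^*$.

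\textbf{Step 1 (localizing the equation).} Project \eqref{eq:fourier-eq} onto $|k|>3s$, i.e. write $\omega P_{>3s}f = P_{>3s}Q(f,f,f)$, and pair with $P_{>3s}f$. This gives
\begin{equation*}
\omega\, a(3s)^2 = \cQ(P_{>3s}f,f,f,f)
\end{equation*}
up to conjugation. Decompose each of the three remaining entries as $f=P_{\le s}f+P_{>s}f$. Since $k_1=k_2-k_3+k_4$ with $|k_1|>3s$, at least one of $k_2,k_3,k_4$ exceeds $s$ in modulus. The term with all three entries low therefore vanishes, and every surviving term contains at least one factor $P_{>s}f$.

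\textbf{Step 2 (the linear-in-tail terms).} Consider the terms with exactly one factor at $|k_j|>s$ and the other two at $|k|\le s$. The high output $|k_1|>3s$ then forces a frequency separation: either the pair $(k_1,k_j)$ or a pair (high, low) has separation $\gtrsim s$. Pair the entries as $T_rP_{>3s}f\cdot\overline{T_r P_{\le s}f}$ and apply the separated-frequency case of the bilinear estimate \Cref{T:MV-Ltorus} in $L^2_{r,x}([0,1]\times\T_L)$. The key point is that this case should gain a factor like $(L/(\text{separation}))^{1/2}$ times a factor in $L$. After rescaling the periodic phase $4\pi^2\Phi r/L^2$, we expect a gain of $L^{-1/2}$ relative to the trivial bound, uniformly in $s\ge s_0$. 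This yields
\begin{equation*}
|\text{these terms}| \le K L^{-1/2} a(3s)\, a(s)\,\|f\|_2^2 ,
\end{equation*}
and there are at most two such pairings with the high factor, which accounts for the factor $2K$. Dividing by $a(3s)$ gives the linear term in the recursion.

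\textbf{Step 3 (nonlinear terms).} The terms with two or three tail factors are bounded by the non-separated bilinear case, giving a contribution $\lesssim a(3s)\,a(s)^2$. Since $a(s)\to0$, for $s>s_0(f,L)$ this is absorbed as $\epsilon\, a(s)$ with $\epsilon$ arbitrarily small. This slightly perturbs the constant, and the perturbation is absorbed into $K$ or into the $\lesssim_L$.

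\textbf{Main obstacle.} The main difficulty is Step 2: extracting exactly the $L^{-1/2}$ gain with a constant $K$ independent of both $L$ and $s$ from \Cref{T:MV-Ltorus}. The ratio $3$ has to be chosen so that the separation is comparable to $s$, and the resulting gain must not depend on $s$. I would first check that the separated case of the bilinear estimate gives a bound of the form $\|T_rg\,\overline{T_rh}\|_{L^2}\lesssim (L^{-1}+N^{-1})^{1/2}$-type behaviour after scaling, where $N$ is the separation. If instead the bound carries an $s$-dependence, I would take $s_0\gtrsim L$ so that the $L^{-1/2}$ term dominates.
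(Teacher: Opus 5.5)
Your argument is essentially the paper's proof, namely Lemma~\ref{L:selfcons} with $\gamma=2$ followed by the iteration in Corollary~\ref{cor:HL}: test against the tail beyond $3s$, split at $s$, kill the all-low term by quasilocality, gain $L^{-1/2}$ from the separation $>2s$ between $P_{>3s}f$ and $P_{\le s}f$, absorb the remaining tail terms using $a(s)\to 0$, and iterate along $3^n s_0$ (your direct absorption is slightly more direct than the paper's preliminary $s^{-1/4}$ bootstrap). One correction to your ``main obstacle'': with $T=1$, Theorem~\ref{T:MV-Ltorus} gives $\Delta=L/M_0$ for $L<M_0<L^2$ and reaches $1/L$ only when $M_0\ge L^2$, so you need $s_0\ge \tfrac12 L^2$ rather than $s_0\gtrsim L$ (harmless, since $s_0$ may depend on $L$); also, the factor $2$ in $2K$ comes from the slack used in that absorption, not from counting pairings, since there are three one-high--two-low terms, not two.
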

This is proved in Section \ref{sec:superpoly}.
In addition, we are able to prove exponential decay of the Fourier coefficients.
    \begin{thm}
    \label{thm:1.4}
        Let $L$ be sufficiently large. Then, the Fourier modes of every mass-$1$ weak solution $f$ of \eqref{E:0av-dmnls-periodic}, with $\omega=\omega_1(L)$
        decay exponentially. That is, there exists a constant $c > 0$ so that 
        \begin{equation*}
            |\hat{f}(k)| \lesssim e^{-c|k|}
        \end{equation*}
        for all $k\in \Z$. In particular, all such solutions are analytic. 
    \end{thm}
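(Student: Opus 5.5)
The plan is to adapt the exponential-decay argument of \cite{EHL11} for the real line to the periodic setting, working directly with the Fourier-side equation \eqref{eq:fourier-eq}. Fix $\mu\ge 0$ and $\epsilon>0$, and introduce the bounded, smoothed exponential weight
\[
F_{\mu,\epsilon}(k)=\exp\Big(\frac{\mu |k|}{1+\epsilon |k|}\Big).
\]
This weight is bounded for each $\epsilon>0$. It is subadditive in the sense that $F(k_1)\le F(k_2)F(k_3)F(k_4)$ whenever $k_1=k_2-k_3+k_4$, since $|k|\mapsto \mu|k|/(1+\epsilon|k|)$ is increasing and subadditive. Finally, $F_{\mu,\epsilon}\to e^{\mu|k|}$ as $\epsilon\to0$.

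Set $h=h_{\mu,\epsilon}=F_{\mu,\epsilon}\hat f$. Multiplying \eqref{eq:fourier-eq} by $F(k)$ and using subadditivity gives
\[
\omega\,|h(k)|\le \frac1L\sum_{k-k_2+k_3-k_4=0}\Big|\int_0^1 e^{4\pi^2 i\Phi r/L^2}\,dr\Big|\;|h(k_2)||h(k_3)||h(k_4)|,
\]
with an error controlled by the variation of $F$ on the region where all frequencies are comparable. Next, split $\hat f$ into a low-frequency part $|k|\le s$ and a high-frequency part $|k|>s$.

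Trilinear terms containing at least two high-frequency factors are small. By Theorem \ref{thm:poly-decay}, once $L$ is large the tail $\|\hat f\,\bone_{|k|>s}\|_{\ell^2}$ tends to zero as $s\to\infty$. The bilinear estimate \Cref{T:MV-Ltorus} then bounds these terms by $C\|h\|_{\ell^2}\|h_{>s}\|_{\ell^2}^2$, and the polynomial decay keeps $\|h_{>s}\|_{\ell^2}$ small uniformly in $\epsilon$ for small $\mu$.

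Terms with exactly one high-frequency factor require the separation-of-frequencies gain from \Cref{T:MV-Ltorus}. This is the periodic analogue of the estimate $\|T_r f\,T_r g\|_{L^2}\lesssim (\text{separation})^{-1/2}\|f\|\|g\|$, which should hold uniformly in $L$ once $L$ is large. It produces a factor $(s/\text{distance})^{1/2}$, which tends to zero for the dominant configuration. For terms in which all factors are low, the weight is bounded by $e^{3\mu s}$ and contributes a constant.

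Taking $\ell^2$ norms in $k$ and combining these bounds, I aim for an inequality of the form
\[
\omega\|h\|_{\ell^2}\le C_s e^{3\mu s}+\delta(s)\|h\|_{\ell^2}+C\|h\|_{\ell^2}^3\,\eta(s),
\]
with $\delta(s),\eta(s)\to0$ as $s\to\infty$. Since $\omega=\omega_1(L)$ is bounded below for large $L$, a continuity (bootstrap) argument in $\mu$ shows that $\|h_{\mu,\epsilon}\|_{\ell^2}$ is bounded uniformly in $\epsilon$ for some small $\mu>0$. The continuity holds because $h$ depends continuously on $\mu$ and equals $\hat f$ at $\mu=0$. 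Letting $\epsilon\to0$ and applying Fatou's lemma gives $e^{\mu|k|}\hat f\in\ell^2$, hence $|\hat f(k)|\lesssim e^{-\mu|k|}$.

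Analyticity then follows because $f$ extends holomorphically to the strip $|\mathrm{Im}\,x|<\mu L/(2\pi)$.

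The main obstacle is the single-high-frequency term. The oscillatory factor $\int_0^1 e^{i\Phi r/L^2}dr$ yields only a gain of $\min(1,L^2/|\Phi|)$, and $\Phi=2(k-k_2)(k_2-k_3)$ on the resonance set. When one of the two differences is small, so that there is near-resonance on the lattice, the required decay must instead come from the counting factor $1/L$ together with the sum over the other difference. Making this work is what forces $L$ to be large, mirroring the $L^{1/2}$ appearing in Theorem \ref{thm:poly-decay}.
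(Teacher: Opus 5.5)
Your overall framework matches the paper's: the weights $e^{\mu|k|/(1+\eps|k|)}$, their subadditivity, a low/high splitting, a separated-support gain, a continuity argument and $\eps\to0$. However, the decisive estimate is missing, and the way you set it up cannot produce it. Putting absolute values on the $r$-integral replaces $\cQ$ by the positive form $\frac1L\sum_{\cD}\bigl|\int_0^1 e^{4\pi^2 i\Phi r/L^2}\,\mathrm{d}r\bigr|\prod_j|a_j(k_j)|$. Theorem~\ref{T:MV-Ltorus} does not apply to this form: it needs the quadrilinear expression written as $\int_0^1\int_{\T_L}(T_rf_1\overline{T_rf_2})(\overline{T_rf_3}T_rf_4)$, followed by Cauchy--Schwarz in $(r,x)$. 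The kernel bound $\min(1,L^2/|\Phi|)$ alone is not summable either: with $\Phi=2mp$ and fixed $m\neq0$, $\sum_p\min(1,L^2/|mp|)$ diverges logarithmically. This is exactly the ``main obstacle'' you leave open. The missing idea is to keep the oscillatory integral intact and let the weight enter only through the ratio $M_{\mu,\eps}(\bk)=e^{F(k_1)-F(k_2)-F(k_3)-F(k_4)}$, with $F(k)=\mu|k|/(1+\eps|k|)$. On $\cD$ one has $k_1-k_2=k_4-k_3$ and $k_1-k_4=k_2-k_3$, so this ratio factorizes as $\sigma_1(k_1,k_2)\sigma_2(k_3,k_4)=\tilde\sigma_1(k_1,k_4)\tilde\sigma_2(k_2,k_3)$. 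Every factor lies in $(0,1]$ by subadditivity and evenness of $F$. Cauchy--Schwarz in $(r,x)$ and the symbol version Corollary~\ref{cor:bil-op} then give boundedness and the separation gain for the pairs $\{1,2\},\{3,4\},\{1,4\},\{2,3\}$. The pairs $\{1,3\},\{2,4\}$ need an extra cutoff $\rho_\delta(k_1-k_2)$, rewritten through its Fourier transform as a superposition of translations (Proposition~\ref{P:sepsupportsbound}).

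Second, your bookkeeping of the gain is wrong on $\T_L$. The factor from Theorem~\ref{T:MV-Ltorus} is $\Delta_L(\delta)^{1/2}$, and this equals $L^{-1/2}$ for every $\delta>L^2$; the floor comes from exactly resonant interactions. So it does not tend to $0$ as $s\to\infty$, and your $\delta(s)\to0$ is false. This floor is precisely where large $L$ enters. In Lemma~\ref{L:selfconsest} the constant term is $e^{3\mu\tau}\bigl(\Delta_L(\tau)^{1/2}+\|f_\sim\|_{L^2}\bigr)$, and it must lie below $\min(\omega/2,g_0(\omega))$. That is condition \eqref{eq:cond-L-exp}, which holds for large $L$ by Proposition~\ref{pro:unif-lower}.

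Your claim that polynomial decay keeps $\|h_{>s}\|_{\ell^2}$ small uniformly in $\eps$ is circular. Uniformly in $\eps$ this quantity is $\|e^{\mu|k|}\hat f\,\bone_{|k|>s}\|_{\ell^2}$, which is the statement to be proved; Theorem~\ref{thm:poly-decay} is not needed at all. Moreover, the cubic and quadratic terms carry no small coefficient. In an inequality for the full $\|h\|_{\ell^2}$, the all-low term is not small: at $\mu=0$ it essentially reproduces $\omega\|\hat f\|_{\ell^2}$. So nothing traps $\|h\|$ as $\mu$ increases.

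The paper avoids this as follows. It tests only against $e^{2F_{\mu,\eps}(P)}f_>$ and fixes $\mu$ by $e^{\mu\tau}=2$, so all low-frequency weights are at most $2$. This gives $G_\omega(\|f_>\|_{\mu,\eps})\le g_0(\omega)$ with fixed coefficients. It then traps $\|f_>\|_{\mu,\eps}$ in $[0,\nu_{\max}/2]$ by continuity in $\eps$, starting from $\eps=1$, where this norm is at most $2\|f_>\|_{L^2}\le\nu_{\max}/2$.
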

    This is proved in Section \ref{sec:exp}. There, the condition on the size of $L$ is made more explicit in terms of $\omega$.

    In both Theorems \ref{thm:poly-decay} and \ref{thm:1.4}, note that our estimates do not give a uniform decay rate in Fourier space.  For instance, one reason for this is translation invariance in the Fourier domain. As our main goal is to prove analyticity, this is sufficient, while an interesting consideration for future work is the width of that strip. 

    The requirement that $L$ be large is meaningful because we also prove in Section \ref{sec:largeL} that
    $$
    \liminf_{L\to \infty} \omega_1(L)\geq \omega_1^*>0,$$
see Proposition \ref{pro:unif-lower}.
    This implies the following:
\begin{thm}
\label{thm:nontrivial}
For $L$ sufficiently large, the maximizer for \eqref{maxdef}
is not the trivial constant solution $z_L=L^{-\frac12}$.
\end{thm}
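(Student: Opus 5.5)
The plan is to compare the value of the functional at the constant $z_L = L^{-1/2}$ with $\omega_1(L)$, using the lower bound from Proposition \ref{pro:unif-lower}.

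\emph{Step 1: the constant is too small.} Since $z_L$ has only the zero Fourier mode, $T_r z_L = z_L$ for every $r$, because $e^{ir\partial_x^2}$ acts trivially on constants. Hence
\begin{equation*}
\cQ(z_L,z_L,z_L,z_L) = \int_0^1\int_{\T_L} L^{-2}\dx\dr = L^{-1}.
\end{equation*}
The same value comes out of \eqref{E:qdefFourier}: only $\bk=0$ contributes, giving $L^{-1}|\hat z_L(0)|^4$ with $\hat z_L(0)=1$. Thus $\cQ(z_L,\dots)\to 0$ as $L\to\infty$.

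\emph{Step 2: the maximum stays bounded below.} By Proposition \ref{pro:unif-lower}, $\liminf_{L\to\infty}\omega_1(L)\ge \omega_1^*>0$. So there is $L_0$ such that $\omega_1(L)\ge \omega_1^*/2$ for all $L\ge L_0$.

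\emph{Step 3: conclusion.} Enlarge $L_0$ so that also $L^{-1}<\omega_1^*/2$ for $L\ge L_0$. Then, for $L\ge L_0$,
\begin{equation*}
\cQ(z_L,z_L,z_L,z_L) = L^{-1} < \omega_1(L) = \max_{\|f\|_{L^2(\T_L)}=1}\cQ(f,f,f,f).
\end{equation*}
Since $\|z_L\|_{L^2(\T_L)}=1$, this shows $z_L$ does not attain the maximum. The same holds for any unimodular multiple $e^{i\theta}z_L$, since the functional is invariant under phase.

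The only substantive input is Proposition \ref{pro:unif-lower}, and that is where the real difficulty lies. Its proof should go as follows. Take the real-line maximizer $f^*$ from \eqref{maxdefR}, truncate it to a window much shorter than $L$, and periodize it. Then show that $\int_0^1\int|T_r f|^4$ on $\T_L$ converges to its value on $\R$. The key point is that for $r\in[0,1]$ the propagator spreads mass only over $O(1)$ scales up to tails, so the wraparound interactions vanish as $L\to\infty$. Finally, renormalize the mass to $1$. Granting that proposition, the argument above is elementary.
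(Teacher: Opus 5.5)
Your argument is correct and is the paper's own. The paper computes $\cQ(z_L,z_L,z_L,z_L)=L^{-1}$ in Remark \ref{rem:cons} and compares it with $\liminf_{L\to\infty}\omega_1(L)\ge\omega_1^*>0$ from Proposition \ref{pro:unif-lower}, exactly as in your Steps 1--3. Your sketch of that proposition also matches the paper's, except that the paper periodizes the real-line maximizer directly, using its exponential decay to make the wraparound terms $O(L^{-N})$, rather than truncating first; if you do truncate, use a smooth cutoff, so that $T_r$ of the truncated function still has rapidly decaying tails uniformly in $r\in[0,1]$.
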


Since the Schr\"odinger equation preserves symmetry (and anti-symmetry), these equations can be framed in the context of $L^2_s$ and $L^2_a$, the Hilbert spaces of symmetric and anti-symmetric functions respectively and related results can be shown to hold.  We conjecture that the global maximizer for functions constrained in $L^2(\mathbb{R})$ should be symmetric, but in general this shows that even modding out by the large number of symmetries present in this model, there are multiple non-trivial solutions to \eqref{E:0av-dmnls-periodic}.  Note, this is known to not be true for some values of $L$ as seen in \cite{adekoya2022maximisers}, Corollary $6.3$.  As an application of some of our numerical simulations, we will give supporting evidence for this conjecture by computing solutions to \eqref{E:0av-dmnls-periodic} on very large domains.  This result should be strongly related to a uniqueness result proven recently in \cite{choi2023uniqueness}.  

\begin{conj}
\label{conj:sym}
    The optimizer of \eqref{maxdefR} is even up to symmetries.  
\end{conj}

Similarly, we wish to explore the question of nontrivial solutions to \eqref{E:0av-dmnls-periodic} in the case of a torus of length $L$.  Note, we could also consider intervals with homogeneous Dirichlet or Neumann boundary conditions, but by doubling the domain with an even or odd reflection, we return to the periodic case.  In particular, we note that with periodic boundary conditions, we can always simply take a constant solution to \eqref{E:0av-dmnls-periodic}, but we wish to understand if we can find others through the variational interpretation of \eqref{E:0av-dmnls-periodic}.

The paper will proceed as follows.  We review all the necessary notation in Section \ref{S:notation}. Then, in Section \ref{sec:superpoly}, we review the bilinear estimate of Moyua and Vega in the form we need it, along with some refinements, and establish that as $L$ gets large, the Fourier modes of any weak solution to \eqref{E:0av-dmnls-periodic} 
must decay at a polynomial rate that is set by $\omega$ and $L$ and that improves with $L\to \infty$.  In Section \ref{sec:exp}, we modify arguments originated by the second author and collaborator to establish that the Fourier coefficients actually decay exponentially.  In Section \ref{sec:largeL}, we prove the lower bound for $\omega_1(L)$, which also implies Theorem \ref{thm:nontrivial}, and in addition we record an upper bound following Bourgain and Zygmund.  Finally, in Section \ref{sec:num}, we numerically explore solutions to \eqref{E:0av-dmnls-periodic}, demonstrating that indeed there are non-trivial solutions for $L$ sufficiently large and that in fact by restricting to various symmetry classes we can find at least two non-trivial solutions.

\section*{Acknowledgments}
The authors thank Jason Murphy, Vadim Zharnitsky and Almut Burchard for various helpful conversations about dispersion managed systems. JLM credits long-term visits to Karlsruhe Institute of Technology and to Universit\"at Bielefeld  as getting a number of the ideas presented here started and thanks KIT for hosting said visit.

SH acknowledges support from the Deutsche Forschungsgemeinschaft (DFG, German Research Foundation) -- Project-ID 317210226 -- SFB 1283.

DH acknowledges support from the Deutsche Forschungsgemeinschaft (DFG, German Research Foundation) -- Project-ID 258734477 – SFB 1173. 

JLM acknowledges support from the NSF through NSF CAREER Grant
DMS-1352353, NSF grant DMS-1909035,  NSF Applied Math Grant DMS-2307384. 

TVH was partially supported by NSF RTG grant DMS-2135998, NSF Applied Math Grant DMS-2307384, and a Simons Dissertation Fellowship.

        \section{Notation}
           \label{S:notation}
        In this section, we introduce some notation that will be used throughout the remainder of the paper.\par
        We write $A \lesssim B$ or $B \gtrsim A$ to denote the inequality $A \leq CB$ for some constant $C > 0$, where $C$ may depend on parameters like the dimension or the indices of function spaces. If $A \lesssim B $ and $B \lesssim A$ both hold, then we write $A \sim B$. Vectors in $\Z^d$ will be written in boldface, as in $\bk \in \Z^d$. For any $\alpha > 0$, we define the set $[\alpha] \subset \Z$ to be the set of integers 
        \begin{equation}
            [\alpha] \defe \{ -\lceil \alpha \rceil, \ldots, 0,\ldots, \lceil \alpha \rceil \}.
        \end{equation}  

        We define $\T_L$ to be the domain $\R / (L \Z)$ and generically take
        \begin{equation}
        \label{lpnorm}
        \| f \|_{L^p (\T_L)} = \left(  \int_0^L |f(x)|^p dx \right)^{1/p}.
        \end{equation}
        For notational brevity, we define $e_L(x) = L^{-\frac{1}{2}}e^{\frac{2\pi i x}{L}}$. In particular, the functions $\{e_L(kx)\}_{k \in \Z}$ form an orthonormal basis in $L^2(\T_L)$.
        For functions in $L^2 (\T_L)$, we can define the Fourier transform with the scaling
        \begin{equation}
        \label{fourier}
            \hat{f}(k) =\int_{\T_{L}} f(x) e_{L}(-kx)\dx, \ k \in \Z.
        \end{equation}
        We have the $L^2 (\T_L)$ inner product
                \[
                    \jbrak{f, g} := \int_{\T_L} \bbar{f(x)}g(x) \dx = \sum_{k \in\Z} \bbar{\hat{f}(k)} \hat{g}(k).
                \]
        and, with this convention, we have
        \begin{equation}
        \label{ftrep}
        f(x)  =  \sum_{k \in \Z} e_L(kx) \hat f (k).
        \end{equation}
        and the Plancherel identity reads as
        \begin{equation}
        \label{plancherel}
        \sum_{k \in \Z} | \hat f (k) |^2 = \int_0^L|f|^2 \dx,
        \end{equation}
        i.e. that the Fourier transform is a linear isometry $L^2(\T_L) \to \ell^2(\Z)$.    

        Using the normalizations for the Fourier transform established above, we have
    \begin{equation*}
        T_r f_j(x) = \sum_{k_j \in\Z}e_L(k_jx)e^{\frac{-4\pi^2 i k_j^2 r}{L^2}}\hat{f_j}(k_j).
    \end{equation*}
    Inserting this into the definition of $\cQ$, we have 
    \begin{equation*}
    \aligned
        & \cQ(f_1, \ldots, f_4) = \\
            & L^{-1}\sum_{k_1, \ldots, k_4 \in \Z} \int_0^1 \int_{\T_L}e^{\frac{4\pi^2 i\Phi(\bk) r}{L^2}} L^{-1}e^{\frac{2\pi i(-k_1 + k_2 - k_3 + k_4)x}{L}}\prod_{j=1}^4 \mathscr{C}^j \hat{f_j}(k_j) \dx\dr,
        \endaligned
    \end{equation*}
    where $\mathscr{C}$ denotes complex conjugation (so that $\mathscr{C}^j=\mathscr{C}$ for $j$ odd and  $\mathscr{C}^j=\mathrm{id}$ for $j$ even). 
        Performing the $x$ integral we see that
    \begin{equation*}
        \cQ(f_1, \ldots, f_4) =L^{-1} \sum_{k_1 - k_2 + k_3 -k_4 = 0} \int_0^1 e^{\frac{4\pi^2 i \Phi(\bk) r}{L^2}}\prod_{j=1}^4 \mathscr{C}^j \hat{f_j}(k_j) \dr.
    \end{equation*}
    The same algebraic manipulation as before reveals that when $k_1 - k_2 + k_3 - k_4 = 0$ that the phase $\Phi(\bk) := k_1^2 - k_2^2 + k_3^2 - k_4^2$ reduces to 
    \begin{equation*}
        \Phi(\bk)|_{k_1 - k_2 + k_3 - k_4 =0}= -2(k_1 -k_4)(k_3 - k_4),
    \end{equation*}

so that the phase degenerates exactly on the resonant sets $\{k_1=k_4\}$ and $\{k_3=k_4\}$. On their complement, we will be able to exploit the oscillations in the $r$-integral. The contribution of the resonant sets themselves is bounded by
$$ 3L^{-1}\prod_{j=1}^4\|f_j\|_{L^2},
$$
which is harmless.
    \section{The Bilinear Estimate and Polynomial Decay}
        \label{sec:superpoly}
            We first record the proof of \cite[Theorem 2]{moyuaBoundsMaximalFunction2008} adapted to the torus of sidelength $L$, which applies almost verbatim. 
            
            \begin{thm}\label{T:MV-Ltorus}
               Assume that $f,g\in L^2(\T_L)$ such that
                $$\rm{dist}(\supp{\hat{f}}, \supp{\hat{g}}) = M_0\geq 0.$$ 
                For $I = [0, T]$ with $T \leq 1$, we have
                \begin{equation*}
                    \left\| e^{it \partial_x^2} f \bbar{e^{it\partial_x^2}g}\right\|_{L_{t,x}^2 (I \times \T_L)} \lesssim (\Delta(M_0, T))^{\frac{1}{2}} \|f\|_{L^2}\|g\|_{L^2}
                \end{equation*}
                where 
                \begin{equation*}
                    \Delta(M_0, T) = 
                    \begin{cases}
                        \begin{aligned}
                            &\frac{T}{L} , \quad \text{if } M_0 \geq \frac{ L^2}{T},\\
                            &\frac{L}{ M_0}, \quad \text{if } L T^{-\frac{1}{2}} < M_0 < L^2 T^{-1}, \\
                            &T^{\frac{1}{2}}  \quad \text{if } M_0 \leq L T^{-\frac{1}{2}}.    
                        \end{aligned}
                        
                    \end{cases}
                \end{equation*}
            \end{thm}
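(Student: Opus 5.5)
The plan is to compute the space–time $L^2$ norm directly on the Fourier side, as in Moyua–Vega, and reduce everything to a Schur test for an explicit kernel. Write $u=e^{it\partial_x^2}f\,\bbar{e^{it\partial_x^2}g}$. Using the expansion of $T_r$ from Section \ref{S:notation} and Plancherel in $x$ after grouping by the output frequency $n=k-m$, we get
\begin{equation*}
  \|u(t)\|_{L^2(\T_L)}^2=\frac1L\sum_{n}\Bigl|\sum_{k-m=n}e^{-\frac{4\pi^2 i t\,n(k+m)}{L^2}}\hat f(k)\bbar{\hat g(m)}\Bigr|^2 ,
\end{equation*}
where only $|n|\ge M_0$ contributes, and we used $k^2-m^2=n(k+m)$. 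Next we majorize $\bone_{[0,T]}(t)\le \psi(t/T)$ with $\psi$ a fixed Schwartz function whose Fourier transform has compact support. Integrating in $t$ then gives, for each fixed $n$, a quadratic form in the sequence $a_n(k)=\hat f(k)\bbar{\hat g(k-n)}$. Its kernel is
\begin{equation*}
  K_n(k,k')=T\,\hat\psi\Bigl(\tfrac{8\pi^2 T n (k-k')}{L^2}\Bigr),
\end{equation*}
because $(k+m)-(k'+m')=2(k-k')$ when $n$ is fixed. This kernel is supported where $|k-k'|\lesssim L^2/(T|n|)$, and its size is at most $T$.

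By Schur's test, the form for fixed $n$ is bounded by $\sup_k\sum_{k'}|K_n(k,k')|\cdot\sum_k|a_n(k)|^2$. Counting lattice points gives
\begin{equation*}
  \sup_k\sum_{k'}|K_n(k,k')|\lesssim T\Bigl(1+\frac{L^2}{T|n|}\Bigr)=T+\frac{L^2}{|n|}.
\end{equation*}
Summing over $n$ and using $\sum_n\sum_k|a_n(k)|^2=\|f\|_{L^2}^2\|g\|_{L^2}^2$ yields
\begin{equation*}
  \|u\|_{L^2(I\times\T_L)}^2\lesssim \sup_{|n|\ge M_0}\Bigl(\frac TL+\frac L{|n|}\Bigr)\|f\|_{L^2}^2\|g\|_{L^2}^2 .
\end{equation*}
This gives the first two cases of $\Delta$ immediately. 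If $M_0\ge L^2/T$, the term $T/L$ dominates. If $LT^{-1/2}<M_0<L^2T^{-1}$, the term $L/M_0$ dominates.

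The third case, $M_0\le LT^{-1/2}$ (including $M_0=0$), needs the bound $T^{1/2}$ uniformly in $L$. This is the main obstacle: the Schur bound above blows up as $|n|\to0$. The first approach I would try is to decompose $\hat f$ and $\hat g$ into frequency blocks $B_j$ of width $A=LT^{-1/2}$ in $k$, so that $f=\sum_j f_j$ and $g=\sum_l g_l$.

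For block pairs with $|j-l|\ge2$, the supports are separated by at least $|j-l|A$. The second case then gives $\|u_{jl}\|^2\lesssim (T^{1/2}/|j-l|)\|f_j\|^2\|g_l\|^2$. The products $u_{jl}$ have output frequencies near $(j-l)A$, so different values of $j-l$ are almost orthogonal in $x$. For fixed $d=j-l$, the products are orthogonal in $t$ modulo the bounded kernel overlap. Summing via Cauchy–Schwarz over $j$ with $d$ fixed, and then over $d$ with the decay $1/|d|$ inherited from the Schur kernel, should give $T^{1/2}$.

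For the near-diagonal pairs $|j-l|\le1$, I would use a Galilean transformation to move the block to frequency $0$. Then I would rescale $x\mapsto x/L$ to reduce to a block of width $\sim T^{-1/2}$ in physical frequency over time $T$. On that scale the time is shorter than the period, so the real-line $L^4$ Strichartz bound applies. Concretely, I would repeat the same Schur computation without the $|n|$ lower bound and use that $|n|\le 2A$: the number of admissible $k'$ is then at most $\sim A$, and the kernel integral yields $T\cdot\min(A,L^2/(T|n|))$. Averaging this against the $\ell^2$ structure via a dyadic decomposition in $|n|$ should again give $T^{1/2}$.

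If the near-orthogonality bookkeeping becomes cumbersome, the fallback is to cite the $L^4([0,T]\times\T_L)$ Strichartz bound for frequency blocks of width $LT^{-1/2}$ from the rescaled Bourgain argument, and to apply Hölder's inequality $\|u\|_{L^2}\le\|e^{it\partial_x^2}f\|_{L^4}\|e^{it\partial_x^2}g\|_{L^4}$ blockwise.
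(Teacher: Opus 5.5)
Your first two cases are correct and follow the paper's argument in substance. The paper substitutes $\tau=mt$ and uses orthogonality of the exponentials over the period $p_L=L^2/(4\pi)$, counting how many periods fit in $[0,mT]$. You instead use a smooth time cutoff and Schur's test on $K_n$. Both give the per-frequency bound $(T+L^2/|n|)\sum_k|a_n(k)|^2$.

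The gap is the third case. You flag it as the main obstacle and then leave it as a hedged sketch: ``should give $T^{1/2}$'', plus a fallback that amounts to citing the very $L^4$ estimate being proved. In particular, the near-diagonal products $u_{jj}$ all share the output frequencies $|n|\lesssim A$, so nothing in your sketch explains how to sum them over $j$. The Galilean-rescaling appeal to the real-line Strichartz estimate is also only heuristic.

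The missing idea is elementary: split the sum over the output frequency $n$ at $A=LT^{-1/2}$, instead of decomposing $f$ and $g$. For $|n|<A$, do not use Schur. The crude bound (your kernel satisfies $|K_n|\lesssim T$, or just bound the integrand pointwise in $t$) together with Cauchy--Schwarz gives
\[
\int_0^T\Bigl|\sum_k a_n(k)\,e^{-\frac{4\pi^2 i t n(2k-n)}{L^2}}\Bigr|^2\dt\le T\Bigl(\sum_k|\hat f(k)|\,|\hat g(k-n)|\Bigr)^2\le T\|f\|_{L^2}^2\|g\|_{L^2}^2
\]
uniformly in $n$. So these $\lesssim A$ frequencies contribute at most $L^{-1}\cdot A\cdot T\sim T^{1/2}$.

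For $|n|\ge A$, your Schur bound applies verbatim, because it used only the range of $n$ being summed, not any support-separation hypothesis. It gives $\sup_{|n|\ge A}(T/L+L/|n|)\lesssim T/L+T^{1/2}\lesssim T^{1/2}$ for $L\ge 1$. This is exactly the paper's proof of the third case, where the threshold $k\sim L/T^{1/2}$ balances $kT/L$ against $L/k$.

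Your block scheme can in fact be closed too. For a near-diagonal pair the restricted kernel has row sums $\lesssim TA$, giving $\|u_{jl}\|^2\lesssim T^{1/2}\|f_j\|^2\|g_l\|^2$, and the triangle inequality plus Cauchy--Schwarz in $j$ sums these. But that is the same crude bound in disguise, and the off-diagonal orthogonality bookkeeping is unnecessary.
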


            \begin{proof}
                We can write 
                \begin{align*}
                    e^{it\partial_x^2}f(x) &= \sum_{n \in \Z} a_n e^{-\frac{4\pi^2 i n^2t}{L^2}}e_L(nx), \; a_n=\hat{f}(n) \\
                    e^{it\partial_x^2}g(x) &= \sum_{k \in \Z} b_k e^{-\frac{4\pi^2 i k^2t}{L^2}}e_L(kx), \; b_k=\hat{g}(k)
                \end{align*}
                so that the product in the $L^2$ norm above is 
                \begin{equation}\label{E:prodfreeprops}
                    e^{it\partial_x^2}f(x) \bbar{e^{it\partial_x^2} g(x)}= L^{-1}\sum_{n,k}a_n \bbar{b_k}e^{\frac{2\pi i (n-k)x}{L}} e^{-\frac{4\pi^2 i (n^2-k^2)t}{L^2}}.
                \end{equation}
            By assumption, the Fourier support is separated by $M_0$. Therefore, the substitution of the summation index $k = n-m$ yields 
                \begin{equation*}
                    \eqref{E:prodfreeprops} = L^{-1}\sum_{|m| \geq M_0} e^{\frac{2\pi i m x}{L}} e^{\frac{4\pi^2 i m^2 t}{L}} \left(\sum_n a_n \bbar{b_{n-m}}e^{\frac{-4\pi^2 i (2nm )t}{L^2}}\right).
                \end{equation*}
                From now on, by exchanging the roles of $f$ and $g$, we may assume that $m$ is nonnegative.
                Now substituting this expression into the squared $L_{t,x}^2(I \times \T_L)$ norm, expanding and using orthogonality over $\T_L$, we have
                \begin{align}
                    \int_0^T \int_{\T_L} |e^{it\partial_x^2} f \bbar{e^{it\partial_x^2}g}|^2 \dx \dt &= L^{-1}\sum_{m \geq M_0} \int_0^T \left| \sum_{n} a_n \bbar{b_{n-m}} e^{\frac{4\pi^2i 2mn t}{L^2}} \right|^2 \dt \\
                    &=  L^{-1}\sum_{m\geq M_0} \frac{1}{m}\int_0^{mT}\left| \sum_n a_n \bbar{b_{n-m}} e^{\frac{4\pi^2i 2n\tau}{L^2}} \right|^2 
                    \dtau .
                    \label{E:Ltx2}
                \end{align}
                If $M_0 \geq \frac{L^2}{T}$, then we can continue from \eqref{E:Ltx2} by 
                \begin{align*}
                    \rm{RHS}\eqref{E:Ltx2} &\leq L^{-1}\sum_{m \geq \frac{L^2}{T}} \frac{1}{m} \int_0^{mT} \left| \sum_{n} a_n \bbar{b_{n-m}} e^{\frac{4\pi^2 i 2n\tau}{L^2}}\right|^2 \dtau \\
                    &\leq L^{-1}\sum_{\nu = 1}^\infty \sum_{\frac{\nu L^2}{T} \leq m \leq \frac{(\nu + 1)L^2}{T}}\frac{1}{m} \int_0^{L^2+ \nu L^2} \left|\sum_{n}a_n \bbar{b_{n-m}} e^{\frac{4\pi^2 i 2n\tau}{L^2}} \right|^2 \dtau.
                \end{align*} 
 Let $p_L:=\frac{L^2}{4\pi}$. By $p_L$-periodicity in time of the function under the integral, and since $(\nu+1)L^2=4\pi(\nu+1)p_L$, we can bound the integral over $[0, (\nu +1)L^2]$ by $\lceil 4\pi (\nu+1)\rceil\lesssim (\nu+1)$ times the integral from $0$ to $p_L$. Next note that the condition on the sum implies that $\tfrac{\nu+1}{m} \leq \tfrac{\nu+1}{\nu} \tfrac{T}{L^2}$. Continuing from above, we have 
                \begin{align*}
                    &
                        L^{-1}\sum_{\nu = 1}^\infty \sum_{\frac{\nu L^2}{T} \leq m \leq \frac{(\nu + 1)L^2}{T}}\frac{1}{m} \int_0^{L^2+ \nu L^2} \left|\sum_{n}a_n \bbar{b_{n-m}} e^{\frac{4\pi^2 i 2n\tau}{L^2}} \right|^2 \dtau  \\
                     \lesssim {}&L^{-1}\sum_{\nu = 1}^\infty\sum_{\frac{\nu L^2}{T} \leq m \leq \frac{(\nu + 1)L^2}{T}}\frac{(\nu + 1)}{m} \int_0^{p_L} \left|\sum_{n}a_n \bbar{b_{n-m}} e^{\frac{4\pi^2i 2n\tau}{L^2}} \right|^2 \dtau    
                    \\
                    \lesssim{}& \frac{T}{L^3} \sum_{\nu = 1}^\infty \sum_{\frac{\nu L^2}{T} \leq m \leq \frac{(\nu+1)L^2}{T}}\int_0^{p_L} \left|\sum_{n}a_n \bbar{b_{n-m}} e^{\frac{4\pi^2i 2n\tau}{L^2}} \right|^2 \dtau.
                    \end{align*}
                    By expanding the modulus squared and again using orthogonality of the complex exponentials on $[0, p_L]$, we get a bound of
                     \begin{align*} {}&\frac{T}{L^3} \sum_{ m=1}^\infty p_L \sum_{n}|a_n|^2|b_{n-m}|^2 
                    \lesssim{} \frac{T}{L}\|f\|_{L^2}^2 \|g\|_{L^2}^2.
                \end{align*}
                
If $M_0<L^2/T$, we have to add the term
\begin{align*}
     &L^{-1}\sum_{M_0\leq m < \frac{L^2}{T}} \frac{1}{m} \int_0^{mT} \left| \sum_{n} a_n \bbar{b_{n-m}} e^{\frac{4\pi^2i 2n\tau}{L^2}}\right|^2 \dtau \\
     & \hspace{1cm} \lesssim{} L^{-1}\sum_{M_0\leq m < \frac{L^2}{T}} \frac{1}{m} \int_0^{p_L} \left| \sum_{n} a_n \bbar{b_{n-m}} e^{\frac{4\pi^2 i 2n\tau}{L^2}}\right|^2 \dtau \\
     & \hspace{2cm} \lesssim{} \frac{L}{M_0} \|f\|_{L^2}^2 \|g\|_{L^2}^2,
     \end{align*} 
     again using orthogonality in $\tau$. Note that $T/L\leq L/M_0$ in this case.

     Next, for any threshold $k> M_0$ (chosen below), we also have the estimate
     \begin{align*}
     &L^{-1}\sum_{M_0\leq m < k} \frac{1}{m} \int_0^{mT} \left| \sum_{n} a_n \bbar{b_{n-m}} e^{\frac{4\pi^2 i 2n\tau}{L^2}}\right|^2 \dtau \\
    & \hspace{1cm} \leq{} \frac{T}{L}\sum_{M_0\leq m <k}  \left( \sum_n |a_n| |b_{n-m}| \right)^2 \\
      & \hspace{2cm} \lesssim{} \frac{kT}{L}\|f\|_{L^2}^2 \|g\|_{L^2}^2,
     \end{align*} 
     where we have used Cauchy-Schwarz. Therefore, if $M_0< L/T^{\frac12}$, we choose $k\sim L/T^{\frac12}$ so that the last two bounds are comparable and the claim is proved in the third case. 
     
     Note that if $M_0=0$, i.e.\ there is no separation of the supports, the third bound still holds.
            \end{proof}

We remark that due to the use of the triangle inequality on the Fourier coefficients in the above proof, the Theorem generalizes as follows:

\begin{cor}\label{cor:bil-op}
 For a bounded symbol $\sigma:\Z^2\to \C$ let
 $$B_\sigma(f,g)(t,x):=L^{-1}\sum_{m,n}\sigma(n,n-m) \widehat{f}(n) \bbar{\widehat{g}(n-m)}e^{\frac{2\pi i m x}{L}} e^{-\frac{4\pi^2 i (n^2-(n-m)^2)t}{L^2}}.$$
    Then, under the assumptions of Theorem \ref{T:MV-Ltorus}, we have
    \begin{equation*}
                    \left\| B_\sigma(f,g) \right\|_{L_{t,x}^2 (I \times \T_L)} \lesssim (\Delta(M_0, T))^{\frac{1}{2}}\|\sigma\|_{\ell^\infty} \|f\|_{L^2}\|g\|_{L^2},
                \end{equation*}
                with the same implicit constant and $\Delta(M_0, T)$ as in Theorem \ref{T:MV-Ltorus}. Moreover, the separation hypothesis may be dropped if $\sigma(n,n-m)=0$ whenever $|m|<M_0$.
\end{cor}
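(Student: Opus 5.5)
The plan is to rerun the proof of Theorem \ref{T:MV-Ltorus} with $a_n\bbar{b_{n-m}}$ replaced everywhere by $\sigma(n,n-m)a_n\bbar{b_{n-m}}$, where $a_n=\hat f(n)$ and $b_k=\hat g(k)$. The proof never uses any structure of these products beyond their moduli, so the symbol should only cost a factor $\|\sigma\|_{\ell^\infty}^2$ in the squared norm.

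\textbf{Step 1.} By definition, $B_\sigma(f,g)$ is exactly the expression \eqref{E:prodfreeprops}, after the substitution $k=n-m$, with the coefficient $\sigma(n,n-m)a_n\bbar{b_{n-m}}$. Orthogonality of $e^{2\pi i m x/L}$ on $\T_L$ then gives the analogue of \eqref{E:Ltx2}:
\[
\|B_\sigma(f,g)\|_{L^2_{t,x}}^2=L^{-1}\sum_{m}\frac1m\int_0^{mT}\Bigl|\sum_n \sigma(n,n-m)a_n\bbar{b_{n-m}}e^{\frac{8\pi^2 i n\tau}{L^2}}\Bigr|^2\dtau .
\]
The sum runs over $|m|\ge M_0$, either by the support separation or, without that hypothesis, because $\sigma(n,n-m)=0$ for $|m|<M_0$. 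The reduction to $m\ge0$ goes as before: for $m<0$, apply the same computation to $\bbar{B_\sigma}$, which is a symbol-weighted product of the same type with $f$ and $g$ exchanged and a conjugated, reindexed symbol. That symbol is still bounded by $\|\sigma\|_{\ell^\infty}$.

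\textbf{Step 2.} Treat the three regimes exactly as in the proof of Theorem \ref{T:MV-Ltorus}. For $m\ge L^2/T$ and for $M_0\le m<L^2/T$, use $p_L$-periodicity in $\tau$ together with orthogonality on $[0,p_L]$. This turns the inner square into $p_L\sum_n|\sigma(n,n-m)|^2|a_n|^2|b_{n-m}|^2\le p_L\|\sigma\|_{\ell^\infty}^2\sum_n|a_n|^2|b_{n-m}|^2$. For small $m$, apply the triangle inequality and Cauchy--Schwarz to $\bigl(\sum_n|\sigma||a_n||b_{n-m}|\bigr)^2\le\|\sigma\|_{\ell^\infty}^2\|f\|_{L^2}^2\|g\|_{L^2}^2$. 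Every bound therefore picks up exactly $\|\sigma\|_{\ell^\infty}^2$, and the implicit constants are unchanged. The choice of threshold $k\sim L/T^{1/2}$ in the third case is also unaffected.

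\textbf{Main point.} The only step needing care is checking that no step of the original proof used positivity or a product structure of $a_n\bbar{b_{n-m}}$ in the $n$-variable. Orthogonality in $\tau$ only needs the coefficients to be indexed by $n$ for fixed $m$, which is still true. The argument applies verbatim, and taking square roots gives the claim.
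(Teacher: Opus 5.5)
Your proposal is correct and is essentially the paper's argument. The paper likewise just observes that the proof of Theorem \ref{T:MV-Ltorus} only uses the moduli of the coefficients $a_n\bbar{b_{n-m}}$, through orthogonality in $x$ and $\tau$ and the triangle inequality with Cauchy--Schwarz, so inserting $\sigma(n,n-m)$ costs exactly a factor $\|\sigma\|_{\ell^\infty}$, and the vanishing of $\sigma$ for $|m|<M_0$ takes the place of the support separation. Your treatment of negative $m$ via $\bbar{B_\sigma(f,g)}=B_{\tilde\sigma}(g,f)$ is valid, though more than needed: for fixed $m$ you can work with $|m|$ directly, since the sign of the time frequency does not affect the $\tau$-orthogonality.
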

Note that \eqref{E:prodfreeprops} implies that $B_1(f,g)(t,x)=e^{it\partial_x^2}f(x) \bbar{e^{it\partial_x^2} g(x)}$.

          Below, we will now take $T=1$ to match the corresponding integration interval we have used in the introduction.  Our arguments do not change terribly if we change the width of the time interval we integrate over.

           We can state the following corollary, the proof of which follows similarly as in the proof of Corollary $1$ in \cite{HL09}.
            \begin{cor}\label{C:multlinearTL}
                Suppose that $f_1,\ldots,f_4 \in L^2(\T_L)$ such that there exist $1\leq i,j\leq 4$ with
                \[
                  \delta = \rm{dist}(\supp{\hat{f_i}}, \supp{\hat{f_j}}) > L^\gamma, 1< \gamma \leq 2 ,
                \]
                then the following estimate holds:
                \begin{equation*}
                   |\cQ(f_1, f_2, f_3, f_4)|
                   \lesssim  L^{\frac{1-\gamma}{2}}  \prod_{j = 1}^4 \|f_j\|_{L^2(\T_L)}.
                \end{equation*}
                \begin{proof}
                    Using the structure of $\cQ$ and Cauchy-Schwarz, we have 
                    \begin{equation*}
                        |\cQ(f_1, f_2, f_3, f_4)|\leq \|T_r f_i T_r f_j\|_{L_{t, x}^2} \|T_r f_\ell T_r f_m\|_{L_{t, x}^2}
                    \end{equation*}
                    For the term involving $\ell$ and $m$, we obtain the crude bound $C$. For the term involving functions $i$ and $j$, we see that \Cref{T:MV-Ltorus} implies a gain of a factor of $L^{\frac{1-\gamma}{2}}$.
                \end{proof}
            \end{cor}
  
            \begin{lem}[Fourier space quasilocality]\label{E:quasiloc}
            Let $s>0$ and $i \in \{1,2,3,4\}$.
                If $\supp{\hat{f_i}} \subset \{|k| \geq 3s\}$ and $\supp{\hat{f_j}} \subset \{|k| < s\}$ for all $j \neq i$ then 
                \begin{equation*}
                    \cQ(f_1, f_2, f_3, f_4) \equiv 0
                \end{equation*}
            \end{lem}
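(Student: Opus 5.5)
We will prove this directly from the Fourier representation \eqref{E:qdefFourier}. Since $\cQ$ is multilinear and the Fourier coefficients $\hat f_j$ vanish off their supports, $\cQ(f_1,f_2,f_3,f_4)$ is a sum over $\bk\in\Z^4$ with $k_1-k_2+k_3-k_4=0$ of terms containing the factor $\prod_j \mathscr{C}^j\hat f_j(k_j)$. It therefore suffices to show that no $\bk$ satisfies both the linear constraint and the support conditions. In that case every summand vanishes and the sum is identically zero, with no oscillatory analysis of the $r$-integral needed.

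The key step is the triangle inequality applied to the constraint. The signs $\pm$ in $k_1-k_2+k_3-k_4=0$ are irrelevant for this purpose. For the distinguished index $i$ we can solve for $k_i$ as a signed sum of the other three: $k_i=\pm k_a\pm k_b\pm k_c$ with $\{a,b,c\}=\{1,2,3,4\}\setminus\{i\}$. Hence
\begin{equation*}
|k_i|\le |k_a|+|k_b|+|k_c|.
\end{equation*}
If $\hat f_a(k_a)\hat f_b(k_b)\hat f_c(k_c)\neq0$, then each of these three frequencies satisfies $|k|<s$, so $|k_i|<3s$. This contradicts $|k_i|\ge 3s$, which is required for $\hat f_i(k_i)\neq 0$. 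So on the constraint set at least one factor $\hat f_j(k_j)$ vanishes, and every summand is zero.

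There is essentially no obstacle here. The only point to check is that the Fourier representation of $\cQ$ is legitimate, that is, that the sum converges absolutely or can be justified by density. For $f_j\in L^2$ it suffices to first treat finitely supported $\hat f_j$ and then pass to the limit using the boundedness $|\cQ(f_1,\dots,f_4)|\lesssim\prod_j\|f_j\|_{L^2}$, which follows from Theorem \ref{T:MV-Ltorus}. The truncations can be chosen to preserve the support hypotheses, so the identity $\cQ\equiv0$ passes to the limit. The strict inequality $|k|<s$ in the hypothesis is exactly what makes the bound $3s$ sharp enough.
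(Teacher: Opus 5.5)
Your proof is correct and follows essentially the same approach as the paper: write $\cQ$ on the Fourier side as in \eqref{E:qdefFourier} and observe that the constraint $k_1-k_2+k_3-k_4=0$ forces $|k_i|\le|k_a|+|k_b|+|k_c|<3s$, so every summand vanishes. Your density remark (truncating the $\hat f_j$ while keeping the support hypotheses, then using $|\cQ|\lesssim\prod_j\|f_j\|_{L^2}$) is a justification of the Fourier representation that the paper leaves implicit.
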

            \begin{proof}
               We follow the same strategy as in \cite{HL09}, Lemma $6$.  Re-writing the problem in Fourier space as in \cref{E:qdefFourier}, we then use the given support conditions to observe that all the frequency interactions are $0$. 
            \end{proof}
            Next, following the argument in \cite{HL09}, we derive a `self-consistency' estimate which will be used in a bootstrap argument. Closing this bootstrap will allow us to derive the desired polynomial decay bounds. Before we state the estimate, we define the following quantities (for any function $\vphi \in L^2(\T_L)$):
            \begin{dfn}
                Let $\vphi \in L^2(\T_L)$. Then we set (for any $s > 0$)
                \begin{align*}
                    \hat{\vphi_\lo}(n) &= \hat{\vphi}(n)\bone_{[s]}(n) , \\
                    \hat{\vphi_\hi}(n) &= \hat{\vphi}(n)(1-\bone_{[ s]}(n)).
                \end{align*}
                Here, $[s] = \{- \lceil s \rceil, \ldots, 0, \ldots, \lceil s \rceil\} \subset \Z$, as defined in Section \ref{S:notation}. We then define the $\ell^2$-tail of $\vphi$ by 
                \begin{equation*}
                    \beta_s(\vphi) := \| \vphi_\hi\|_{L^2}.
                \end{equation*}
                If the argument of $\beta_s$ is omitted, its argument will be clear from context.
            \end{dfn}
            We now state the self-consistency estimate:
            \begin{lem}[Self-consistency estimate]\label{L:selfcons}
                Let $f \in L^2(\T_L)$ be a mass-$1$ weak solution to 
                \eqref{E:0av-dmnls-periodic}, i.e.
                \[
                  \omega \langle g,f\rangle  = \cQ(g,f, f,f) \text{ for all }g \in L^2(\T_L),
                \]
                for some $\omega>0$. Define $\beta_s$ to be the Fourier space $\ell_2$-tail of $f$, then for any 
                 $1 < \gamma \leq 2$, any $L > 1$, and any $s > \tfrac12 L^\gamma$ there exists a constant $K > 0$ independent of $L,\gamma,$ and $s$, such that
                \begin{equation*}
                    \omega \beta_{3s} \leq K( \beta_s^3 + L^{\frac{1-\gamma}{2}} \beta_0^2 \beta_s).
                \end{equation*}
            \end{lem}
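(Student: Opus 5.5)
We test the weak equation against $g = f_{\hi}$ at level $3s$, i.e.\ $g$ with $\hat g = \hat f\,(1-\bone_{[3s]})$. Then $\omega\langle g,f\rangle = \omega \beta_{3s}^2$, so it suffices to show
\[
|\cQ(g,f,f,f)| \le K\big(\beta_s^3 + L^{\frac{1-\gamma}{2}}\beta_0^2\beta_s\big)\,\beta_{3s}.
\]
Here $\beta_0 = \|f\|_{L^2}$ up to the zero mode; more precisely $\beta_0$ is the tail beyond $[0]$, and in any case $\beta_0\le 1$. We split each of the three remaining copies of $f$ at level $s$ as $f = f_\lo + f_\hi$, with $\supp{\hat f_\lo}\subset[s]$. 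Multilinearity expands $\cQ(g,f,f,f)$ into eight terms according to how many of the last three slots carry $f_\hi$.

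\textbf{Zero high slots.} The term $\cQ(g,f_\lo,f_\lo,f_\lo)$ vanishes identically by Lemma \ref{E:quasiloc}. Indeed, $\supp{\hat g}\subset\{|k|>\lceil 3s\rceil\}\subset\{|k|\ge 3s\}$, and the three low pieces are supported in $|k|\le\lceil s\rceil$. There is a rounding subtlety between $|k|<s$ and $|k|\le\lceil s\rceil$. It is handled either by the slack in the frequency constraint (if $|k_2|,|k_3|,|k_4|\le \lceil s\rceil$ then $|k_1|\le 3\lceil s\rceil$), or by adjusting the cutoffs by one unit. If necessary, one defines $[\cdot]$-cutoffs so that $\lceil 3s\rceil \ge 3\lceil s\rceil$ up to an absorbed constant.

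\textbf{Exactly one high slot.} These are terms like $\cQ(g,f_\hi,f_\lo,f_\lo)$, and the same argument applies for every position of the high piece. The key point is that $g$ and a low piece $f_\lo$ have Fourier supports separated by at least $3s - s - O(1) \ge 2s - O(1) > L^\gamma$, using $s>\tfrac12 L^\gamma$ (up to harmless rounding). Since $g$ sits in slot 1 (conjugated) and $f_\lo$ can be placed in slot 2 or 4 (unconjugated), we pair $\bbar{T_r g}\,T_r f_\lo$ and apply Theorem \ref{T:MV-Ltorus} via Corollary \ref{C:multlinearTL}. When $f_\lo$ is only in slot 3, we instead use the conjugate pairing: we pair $T_r g$ with $T_r f_\lo$, both undergoing the same conjugation pattern, by taking absolute values and using $|T_r g\, T_r f_\lo| = |T_r g\,\bbar{T_r f_\lo}|$. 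The bilinear estimate then gives the factor $L^{\frac{1-\gamma}{2}}\|g\|\,\|f_\lo\|$. The other pair is bounded crudely by the third case of Theorem \ref{T:MV-Ltorus}, giving $\lesssim\|f_\hi\|\,\|f_\lo\|$. Altogether these terms are
\[
\lesssim L^{\frac{1-\gamma}{2}}\,\beta_{3s}\,\beta_s\,\|f_\lo\|^2 \le L^{\frac{1-\gamma}{2}}\beta_{3s}\beta_s\beta_0^2,
\]
interpreting $\beta_0$ as the full mass norm bound, which is consistent with the statement.

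\textbf{Two high slots.} For terms with exactly two high slots, e.g.\ $\cQ(g,f_\hi,f_\hi,f_\lo)$, we again use the separated pair $(g,f_\lo)$ to gain $L^{\frac{1-\gamma}{2}}$. This yields $\lesssim L^{\frac{1-\gamma}{2}}\beta_{3s}\beta_s^2 \le L^{\frac{1-\gamma}{2}}\beta_{3s}\beta_s\beta_0^2$, since $\beta_s\le\beta_0$ (more precisely $\beta_s \le \|f\|=1$). The all-high term is bounded by Cauchy–Schwarz and the third case of Theorem \ref{T:MV-Ltorus} as $\lesssim \beta_{3s}\beta_s^3$.

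\textbf{Conclusion and main obstacle.} Dividing by $\beta_{3s}$ (the case $\beta_{3s}=0$ is trivial) gives the claim, with $K$ the implicit constant from Theorem \ref{T:MV-Ltorus} times the number of terms, independent of $L,\gamma,s$. We expect the main obstacle to be the quasilocality step with integer rounding in the cutoffs, together with the slot-pairing issue in the one-high terms. The pairing issue is resolved by moduli: $\cQ$ is bounded by $\int\!\!\int\prod_j|T_rf_j|$, and any pair can be grouped as $\|\,|T_r f_i|\,|T_r f_j|\,\|_{L^2}$, which equals $\|T_rf_i\bbar{T_rf_j}\|_{L^2}$.
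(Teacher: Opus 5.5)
Your proposal is correct and follows essentially the same route as the paper. Testing against the tail of $f$ beyond $[3s]$ is the extremal case of the paper's duality formula for $\beta_{3s}$, your eight-term expansion is the paper's five-term grouping written out, and the three ingredients are the same: quasilocality for the all-low term, the crude bound for the all-high term, and the $L^{\frac{1-\gamma}{2}}$ gain from the separated pair $(g,f_\lo)$ via Corollary \ref{C:multlinearTL} for the mixed terms; your moduli argument also handles the conjugation pattern correctly.

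One small caveat, which the paper shares: your ``slack'' remark does not by itself fix the rounding in the quasilocality step, because $3\lceil s\rceil$ can exceed $\lceil 3s\rceil$ for non-integer $s$. You genuinely need to adjust the cutoffs, for example to $|k|\le s$, and likewise $\beta_0$ must be read as $\|f\|_{L^2}=1$ for $\|f_\lo\|\le\beta_0$ to hold.
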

            \begin{proof}
                Begin by setting 
                \begin{align*}
                    \hat{f_\lo}(n) &= \hat{f}(n)\bone_{[s]}(n), \\
                    \hat{f_\hi}(n) &= \hat{f}(n)(1-\bone_{[s]})(n).    
                \end{align*}
                Then by Fourier inversion we have $f = f_\lo + f_\hi$. Further, by duality, 
                \begin{equation*}
                    \beta_{3s} = \sup_{\substack{\supp{\hat{g}} \subset \Z \setminus [3s]\\ \|g\|_{L^2}=1}} |\jbrak{g, f}|.
                \end{equation*}

                Since $f$ is a weak solution to the dispersion management equation, this tells us we should estimate 
                    $|\cQ(g, f,f,f)|$,
                where $g$ obeys the support and normalization conditions above.

                We 
                split each copy of $f$ as $f = f_\lo + f_\hi$
                and use multilinearity to obtain
                \begin{multline}
                    \cQ(g, f,f,f) = \cQ (g, f_\lo, f_\lo, f_\lo) + \cQ (g, f_\hi, f_\hi, f_\hi) \\+ 
                    \cQ(g, f, f_\hi, f_\lo) + \cQ(g, f_\hi, f_\lo, f) + \cQ(g, f_\lo, f, f_\hi). 
                \end{multline}
                The first term is zero by the quasilocality.
               
                Since $\cQ$ is a bounded map $L^2(\T_L)^{\otimes 4} \to \R$, the second term is controlled by 
                \begin{equation*}
                   | \cQ(g, f_\hi, f_\hi, f_\hi) |\lesssim \|g\|_{L^2} \beta_s^3.
                \end{equation*}
                The remaining $3$ terms obey frequency separation conditions that allow us to apply \Cref{C:multlinearTL} and estimate them by 
                \begin{equation*}
                  |  \cQ(g, f_1,f_2,f_3)| \lesssim  L^{\frac{1-\gamma}{2}}  \|g\|_{L^2} \beta_0^2 \beta_s, \quad \{f_1,f_2,f_3\}=\{f,f_{\lo},f_{\hi}\}.
                \end{equation*}
                
                Adding up, we obtain
                \begin{equation*}
                    \omega \beta_{3s} \lesssim \beta_s^3 + L^{\frac{1-\gamma}{2}} \beta_0^2 \beta_s
                \end{equation*}
as claimed.
            \end{proof}

            \begin{cor}
            \label{cor:HL} Let $ \gamma \in (1, 2]$ and $K$ be as in Lemma \ref{L:selfcons}.
                 Let $f$ be a solution as in Lemma \ref{L:selfcons} with $\omega=\omega_1(L)$
                 . Suppose that $L>1$ is large enough so that \begin{equation}\label{eq:cond-L}
                 2K L^{\frac{1-\gamma}{2}}<3^{-\frac14} \omega .\end{equation}
                 Then, there exists a constant $s_0 (f,L,\gamma)\geq L^\gamma$ sufficiently large, that for all $$s > \tilde s_0 := 3 s_0 L^{\gamma-1},$$ the following estimate for $\beta_s$ holds:
                \begin{equation}
                    \beta_s \lesssim_{s_0,\alpha,L,\gamma} s^{-\alpha},
                \end{equation}
                where $\alpha= \log_3(\frac{\omega}{2K} L^{\frac{\gamma-1}{2}})>0$.
            \end{cor}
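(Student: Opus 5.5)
We iterate the self-consistency estimate of Lemma \ref{L:selfcons} along the geometric sequence $s_n = 3^n s_0$, following the bootstrap of \cite{HL09}. Since $f$ has mass $1$, $\beta_0 = 1$. The lemma then gives, for every $s > \tfrac12 L^\gamma$,
\begin{equation*}
\beta_{3s} \le \frac{K}{\omega}\bigl(\beta_s^3 + L^{\frac{1-\gamma}{2}}\beta_s\bigr) = \frac{K}{\omega}\bigl(\beta_s^2 + L^{\frac{1-\gamma}{2}}\bigr)\beta_s .
\end{equation*}
Set $\epsilon := L^{\frac{1-\gamma}{2}}$.

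\textbf{Choice of $s_0$.} Because $f \in L^2(\T_L)$, we have $\beta_s \to 0$ as $s \to \infty$. We choose $s_0 \ge L^\gamma$ so large that $\beta_{s_0}^2 \le \epsilon$. We first check that this smallness propagates. Suppose $\beta_{s_n}^2 \le \epsilon$. Then
\begin{equation*}
\beta_{s_{n+1}} \le \frac{2K\epsilon}{\omega}\,\beta_{s_n} =: \theta\,\beta_{s_n}.
\end{equation*}
By \eqref{eq:cond-L}, $\theta < 3^{-1/4} < 1$, so $\beta_{s_{n+1}}^2 \le \epsilon$ as well. Induction therefore yields
\begin{equation*}
\beta_{s_n} \le \theta^n \beta_{s_0} \quad \text{for all } n.
\end{equation*}
Now write
\begin{equation*}
\theta = 3^{-\alpha}, \qquad \alpha = \log_3\!\Bigl(\frac{\omega}{2K}L^{\frac{\gamma-1}{2}}\Bigr),
\end{equation*}
so that $\alpha > \tfrac14 > 0$ by the hypothesis. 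In these terms,
\begin{equation*}
\beta_{s_n} \le 3^{-n\alpha}\beta_{s_0} = (s_n/s_0)^{-\alpha}\beta_{s_0}.
\end{equation*}

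\textbf{Passage to general $s$.} The function $s \mapsto \beta_s$ is nonincreasing. Hence for $s \in [s_n, s_{n+1})$ we get
\begin{equation*}
\beta_s \le \beta_{s_n} \le 3^{\alpha} s_0^{\alpha}\beta_{s_0}\, s^{-\alpha},
\end{equation*}
which is the claimed bound $\beta_s \lesssim_{s_0,\alpha,L,\gamma} s^{-\alpha}$. The statement asks for this only for $s > \tilde s_0 = 3s_0L^{\gamma-1} \ge s_0$, which is a weaker requirement, so the threshold is harmless. Presumably the authors' larger threshold comes from bookkeeping in the lemma, which requires $s > \tfrac12 L^\gamma$ at each step. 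This holds automatically, since $s_n \ge s_0 \ge L^\gamma$.

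\textbf{Main obstacle.} The delicate step is the initial smallness $\beta_{s_0}^2 \le \epsilon$ together with its propagation. The cubic term must be dominated by the linear term with a contraction factor strictly below $1$. This is exactly where \eqref{eq:cond-L} enters: it guarantees $\alpha > 0$. The $3^{-1/4}$ margin gives some slack, which could alternatively be used to absorb the implicit constant in the lemma. The choice of $s_0$ is non-quantitative, depending on $f$ through the tail of its $\ell^2$ Fourier sequence, and this is why the final constant depends on $s_0$.
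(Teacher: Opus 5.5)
Your proof is correct. It is a streamlined version of the paper's argument rather than a different method.

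\textbf{What the paper does.} It chooses $s_0\ge L^\gamma$ only so that $\frac{K}{\omega}\{\beta_{s_0}^2+L^{\frac{1-\gamma}{2}}\}\le 3^{-\frac14}$. Given \eqref{eq:cond-L}, this requires the tail to lie below a level depending only on $\omega/K$, not on $L$. Monotonicity of $s\mapsto\beta_s$ then gives $\beta_{3s}\le 3^{-\frac14}\beta_s$ for $s\ge s_0$, hence the preliminary bound $\beta_s\le (3s_0)^{\frac14}s^{-\frac14}$. This bound is fed back into the self-consistency estimate to obtain $\beta_s^2\le L^{\frac{1-\gamma}{2}}$, and with it the sharp factor $3^{-\alpha}$, for $s\ge C(s_0)^4L^{\gamma-1}=3s_0L^{\gamma-1}$. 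A final $\lambda(t)=\log_3\beta(3^t)$ argument turns the recursion into power decay.

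\textbf{Where the threshold comes from.} $\tilde s_0$ is produced by this intermediate $s^{-1/4}$ step. It does not come from the range condition $s>\frac12L^\gamma$ of Lemma \ref{L:selfcons}, as you guessed.

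\textbf{What your route changes.} You place $s_0$ directly where $\beta_{s_0}^2\le L^{\frac{1-\gamma}{2}}$. This collapses the two-stage bootstrap and gives the rate $\alpha$ already for all $s\ge s_0$, with the explicit constant $3^\alpha s_0^\alpha\beta_{s_0}$. The discrete-to-continuous step is handled simply by monotonicity on $[s_n,s_{n+1})$.

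\textbf{What each buys.} The price of your route is that your $s_0$ is defined by a smallness level that shrinks as $L$ grows. The paper's bootstrap instead quantifies how far beyond a fixed-level $s_0$ one must go, namely a factor $3L^{\gamma-1}$, to reach that regime.

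\textbf{Two minor remarks.} First, your induction propagating $\beta_{s_n}^2\le L^{\frac{1-\gamma}{2}}$ is superfluous. Since $\beta_s$ is nonincreasing, $\beta_s^2\le\beta_{s_0}^2$ holds for every $s\ge s_0$ automatically; this is exactly how the paper obtains its first contraction. Second, with the convention $[0]=\{0\}$ one only has $\beta_0\le\|f\|_{L^2}=1$, not equality. The inequality is all you actually use, so nothing breaks.
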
 

            \begin{proof}
                By \Cref{L:selfcons}
                and our hypothesis $\|f\|_{L^2}=1$, we know there exists a constant $K >0$ independent of $L,\gamma$, so that 
                \begin{equation}
                    \beta_{3s} \leq \frac{K}{\omega}\left\{\beta_s^2 + L^{\frac{1-\gamma}{2}} \right\}\beta_s \text{ for all } s>\frac12 L^\gamma.
                \end{equation}
                Using the fact that $s \mapsto \beta_s$ monotonically decreases to $0$, and that $\omega_1(L)$ is uniformly bounded, by our assumption \eqref{eq:cond-L} on $L > 1$ we can choose $s_0 = s_0(f,L,\gamma)\geq L^\gamma$ large enough to obtain 
                \begin{equation}\label{E:aprioriest}
                    \frac{K}{\omega}\left\{ \beta_{s_0}^2 + L^{\frac{1-\gamma}{2}}  \right\} \leq 3^{-\frac{1}{4}}.
                \end{equation}
                This yields the \textit{a priori} estimate 
                \begin{equation*}
                    \beta_{3s} \leq 3^{-\frac{1}{4}} \beta_s \qtq{for all} s \geq s_0(L,\gamma).
                \end{equation*}
                Using this estimate, we have an algebraically decreasing bound  
                \begin{align*}
                \beta_s & \leq \beta_{s_0} (3 s_0)^{\frac14}s^{-\frac14}  \leq (3 s_0)^{\frac14}   s^{-\frac14},
                \end{align*}
                 because $\beta_{s_0}\leq 1$. With $C(s_0) =(3s_0)^{\frac14}$ we then have the bound 
            \begin{equation}\label{E:firstbootstrap}
                \beta_s \leq C(s_0) s^{-\frac{1}{4}} \qtq{for} s \geq s_0.
            \end{equation}
             We now feed this estimate back into \eqref{E:aprioriest} and bootstrap. We readily see that \eqref{E:aprioriest} and \eqref{E:firstbootstrap} together imply 
            \begin{equation}
                \beta_{3s} \leq \frac{K}{\omega} \left\{C(s_0)^2 s^{-\frac12} + L^{\frac{1-\gamma}2}\right\}\beta_s \qtq{for} s \geq s_0.
            \end{equation}
            In particular, by matching powers inside the curly braces, we can guarantee the following bound: 
            \begin{equation}
                \beta_{3s} \leq 3^{-\log_3(\frac{\omega}{2K}L^{\frac{\gamma-1}2})}\beta_s \qtq{for} s \geq \tilde{s_0} := C(s_0)^4 L^{\gamma-1}.
            \end{equation}
            As in \cite{HL09}, we introduce the quantity $\lambda(t) = \log_3(\beta(3^t))$ and set $\alpha \defe \log_3(\frac{\omega}{2K} L^{\frac{\gamma-1}{2}})$. The \textit{a priori} estimate from earlier implies that 
            \begin{equation}
                \lambda(t+1) -\lambda(t) \leq -\alpha \qtq{for} t\geq \tilde{t_0} = \log_3(\tilde{s_0}
                ).
            \end{equation}
            Now define $\tilde{\lambda}(t) = \lambda(t) + t\alpha$. The estimate on $\lambda$ implies that 
            \begin{equation}
                \tilde{\lambda}(t+1) - \tilde{\lambda}(t) \leq 0.
            \end{equation}
             In particular, we have 
            \begin{equation}
                \tilde{\lambda}(t) \leq \sup_{s \in [t_0, t_0+1]}\tilde{\lambda}(s) \leq \lambda(t_0) + (t_0+1)\alpha.
            \end{equation}
            Tacing the definitions back to $\beta(s)$, a bit of simple algebra reveals the estimate 
            \begin{equation}
                \beta(s) \lesssim_{s_0, \alpha, L, \gamma} s^{-\alpha} \qtq{for all} s\geq \tilde{s_0}.
            \end{equation}
            \end{proof}      
 Therefore, polynomial decay as stated in Theorem \ref{thm:poly-decay} holds.
 
 \begin{remark}\label{rem:om}
 The condition \eqref{eq:cond-L} is satisfied for sufficiently large $L$ 
 because in Section \ref{sec:largeL} we will show that $\liminf_{L\to \infty} \omega_1(L)>0$.

 \end{remark}
        \section{Exponential Decay}
        \label{sec:exp}

       Having established that some decay must occur in the Fourier series coefficients of an optimizer, we now argue analogously to \cite{EHL11} in order to prove that in fact the decay must be exponential in nature.  We remark that this is sufficient to prove that periodic solutions to \eqref{E:0av-dmnls-periodic} are analytic.  To this end, define 
        \begin{equation}
            F_{\mu, \eps}(\cdot) = \mu \frac{|\cdot|}{1+\eps|\cdot|}, \qtq{for} \mu, \eps > 0. 
        \end{equation}
        and the modified functional 
        \begin{equation}\label{E:Qmueps}
        \tilde{\cQ}_{\mu, \eps}(f_1, f_2, f_3, f_4) \defe  \cQ\left(e^{F_{\mu, \eps}(P)}f_1, e^{-F_{\mu, \eps}(P)}f_2, e^{-F_{\mu, \eps}(P)}f_3, e^{-F_{\mu, \eps}(P)}f_4\right)
        \end{equation}
        where $e^{F_{\mu, \eps}(P)}$ denotes the Fourier multiplier defined by $$\widehat{e^{F_{\mu, \eps}(P)}f}(k)=e^{F_{\mu, \eps}(k)}\widehat{f}(k).$$
        The crucial ingredients in the proof are 
        \Cref{P:sepsupportsbound} and a bootstrap argument, which are analogous to \cite[Theorems 2.2 and 2.3 and Lemma 3.1]{EHL11}. 

We aim to prove boundedness and, in case of separated supports, exploit the gain from the bilinear estimate in Theorem \ref{T:MV-Ltorus} and Corollary \ref{cor:bil-op}.
Since $e^{\pm F_{\mu, \eps}(P)}$ preserves Fourier supports, Lemma \ref{E:quasiloc} applies verbatim to $\tilde{\cQ}_{\mu, \eps}$
. For convenience, we set
        \[
\Delta_L(\delta):=\Delta(\delta,1)=\begin{cases} 1 & \delta\leq L\\
\frac{L}{\delta}& L<\delta\leq L^2\\
\frac{1}{L} & \delta>L^2,
\end{cases}
        \]
        for $\delta\geq 0$.
        \begin{pro}[Boundedness and gain from separated supports]\label{P:sepsupportsbound}
        Let $\mu,\eps\geq 0$, and $f_1,\ldots,f_4\in L^2(\T_L)$.
        Let either $\delta=0$ or let
            \begin{equation}
            \label{eqn:deltaassump}
                \delta := \rm{dist}(\supp{\hat{f_\ell}}, \supp{\hat{f_k}}) >0
            \end{equation}
            for some $\ell, k \in \{1,2,3,4\}$. Then, 
            \begin{equation}
            \label{eqn:freqsep}
              |\tilde{\cQ}_{\mu, \eps}(f_1, f_2, f_3, f_4)|
              \leq c_0 \big(\Delta_{L}(\delta)\big)^\frac12 \prod_{j = 1}^4 \|f_j\|_{L^2},
            \end{equation}
            where the constant $c_0>0$ is independent of $\mu, \eps \geq 0$ and of the $f_j$.
        \end{pro}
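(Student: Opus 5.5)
The plan is to rewrite $\tilde{\cQ}_{\mu,\eps}$ on the Fourier side as an $L^2_{t,x}$ pairing of two bilinear operators of the type in Corollary \ref{cor:bil-op}, with symbols bounded by $1$. Then Cauchy--Schwarz in $L^2_{t,x}([0,1]\times\T_L)$ together with Corollary \ref{cor:bil-op} gives \eqref{eqn:freqsep}, with a constant independent of $\mu,\eps$.

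\textbf{Step 1: the weight.} By \eqref{E:qdefFourier}, $\tilde{\cQ}_{\mu,\eps}$ is the sum for $\cQ$ with each summand multiplied by
\[
w(\bk)=e^{F(k_1)-F(k_2)-F(k_3)-F(k_4)},\qquad F=F_{\mu,\eps},
\]
on the set $k_1-k_2+k_3-k_4=0$. The one structural fact needed is that $F$ is even and subadditive: $F(a+b)\le F(a)+F(b)$. This holds because $t\mapsto \mu t/(1+\eps t)$ is increasing and concave on $[0,\infty)$ with value $0$ at $0$, and $|a+b|\le|a|+|b|$.

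\textbf{Step 2: factorization, for the grouping $(1,2),(3,4)$.} Set $m=k_1-k_2=k_4-k_3$. Then, exactly,
\[
w(\bk)=\underbrace{e^{F(k_1)-F(k_2)-F(k_1-k_2)}}_{\sigma_1(k_1,k_2)}\cdot\underbrace{e^{F(k_4-k_3)-F(k_3)-F(k_4)}}_{\sigma_2(k_4,k_3)} .
\]
Subadditivity gives $F(k_1)\le F(k_2)+F(k_1-k_2)$ and $F(k_4-k_3)\le F(k_3)+F(k_4)$, so $0<\sigma_1,\sigma_2\le 1$. Since $\sigma_1$ depends only on $(k_1,k_2)$ and $\sigma_2$ only on $(k_4,k_3)$, we get
\[
\tilde{\cQ}_{\mu,\eps}=\int_0^1\!\!\int_{\T_L}\bbar{B_{\sigma_1}(f_1,f_2)}\,B_{\sigma_2}(f_4,f_3)\dx\dr,
\]
with the notation of Corollary \ref{cor:bil-op}; the sign conventions of the time phases match \eqref{E:prodfreeprops}. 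Cauchy--Schwarz then bounds $|\tilde{\cQ}_{\mu,\eps}|$ by $\|B_{\sigma_1}(f_1,f_2)\|_{L^2}\|B_{\sigma_2}(f_4,f_3)\|_{L^2}$.

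We apply Corollary \ref{cor:bil-op} with $T=1$ to each factor. The factor carrying the separated pair gets $\Delta_L(\delta)^{1/2}$, and the other factor gets the third-case bound $\Delta\le 1$. If $\delta=0$, both factors use the third case. This handles $\{\ell,k\}=\{1,2\}$ and $\{3,4\}$. The pairs $\{1,4\}$ and $\{2,3\}$ follow identically by grouping $(1,4),(3,2)$, using $k_1-k_4=k_2-k_3$ and the symmetry of $\cQ$ under swapping the two unconjugated slots.

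\textbf{Step 3: the pairs $\{1,3\}$ and $\{2,4\}$.} This is the main obstacle. Here both separated functions carry the same conjugation, so the grouping is $\bbar{T_rf_1\,T_rf_3}\cdot T_rf_2\,T_rf_4$. The factorization still works: with $s=k_1+k_3=k_2+k_4$,
\[
w=e^{F(k_1)-F(k_3)-F(k_1+k_3)}\cdot e^{F(k_2+k_4)-F(k_2)-F(k_4)} ,
\]
and both factors are at most $1$ by subadditivity, using $F(k_1)\le F(k_1+k_3)+F(-k_3)$.

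What is missing is a version of Theorem \ref{T:MV-Ltorus} and Corollary \ref{cor:bil-op} for products $T_rf\,T_rg$ without conjugation. I would re-run the Moyua--Vega proof. Writing $n+k=s$ and $n-k=d$, the phase is $n^2+k^2=\tfrac12(s^2+d^2)$. Orthogonality in $x$ fixes $s$, and the time oscillation is governed by $d$, which runs over the set of differences between $\supp{\hat f}$ and $\supp{\hat g}$; so $|d|\ge\delta$ exactly under the separation hypothesis. The rest of the argument is then the same, resting on the $p_L$-periodicity in $\tau$ and orthogonality in $\tau$ (now in the variable $d$), and it only uses absolute values of the coefficients. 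It therefore tolerates a bounded symbol and yields the same $\Delta_L(\delta)$. I expect checking this adaptation carefully to be the only genuinely new work.

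Since every constant comes from Corollary \ref{cor:bil-op} with $\|\sigma\|_{\ell^\infty}\le1$, the resulting $c_0$ is independent of $\mu$, $\eps$ and the $f_j$.
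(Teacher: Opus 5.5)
Your Steps 1 and 2 coincide with the paper's proof: the factorizations $M=\sigma_1\sigma_2=\tilde\sigma_1\tilde\sigma_2$, Cauchy--Schwarz, and Corollary~\ref{cor:bil-op}. This settles $\delta=0$ and the pairs $\{1,2\},\{3,4\},\{1,4\},\{2,3\}$. Step 3, however, has a genuine gap: the Moyua--Vega argument does not carry over to the unconjugated product $T_rf\,T_rg$ in the way you describe.

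In Theorem~\ref{T:MV-Ltorus}, orthogonality in $x$ fixes the \emph{separated} variable $m$. The remaining phase $n^2-(n-m)^2=2nm-m^2$ is \emph{linear} in the free summation variable $n$, with slope $m$. That is what allows the substitution $\tau=mt$, after which $p_L$-periodicity and orthogonality over one period give $\frac1m(mT+p_L)\sum_n|c_n|^2$, i.e.\ the gain $L^2/m$. For $T_rf\,T_rg$ the roles are reversed. Orthogonality in $x$ fixes $s=n+k$, which carries no separation. The separated variable $d=n-k$ is summed inside the square, with the \emph{quadratic} phase $e^{-2\pi^2 i d^2t/L^2}$. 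No rescaling is available, and these exponentials only have common period $\sim L^2$. So periodicity and orthogonality give merely $\int_0^1|\sum_d c_de^{-2\pi^2 i d^2t/L^2}|^2\,dt\lesssim L^2\sum_d|c_d|^2$, i.e.\ $\|T_rf\,T_rg\|_{L^2}^2\lesssim L\|f\|_{L^2}^2\|g\|_{L^2}^2$. This is worse than the trivial bound and never uses $|d|\ge\delta$.

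The estimate you want is true, but it needs an ingredient you would have to prove or cite. For $d\equiv d'\pmod 2$ with $|d|\neq|d'|$ and $|d|,|d'|\ge\delta$ one has $|d^2-d'^2|\ge 4\delta$. So after merging $\pm d$ the frequencies are $\gtrsim\delta/L^2$-separated. An Ingham/Montgomery--Vaughan type inequality $\int_0^1|\sum_j b_je^{i\lambda_jt}|^2\,dt\lesssim(1+\gamma^{-1})\sum_j|b_j|^2$ for $\gamma$-separated $\lambda_j$ then gives $L^{-1}(1+L^2/\delta)\lesssim\Delta_L(\delta)$ for $\delta>L$, which is the only range where a gain is needed.

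The paper avoids unconjugated products altogether. For $\{1,3\}$ one has $\delta\le|k_1-k_3|\le|k_1-k_2|+|k_2-k_3|$. For $\{2,4\}$, using $k_1-k_4=k_2-k_3$, one has $\delta\le|(k_1-k_2)-(k_2-k_3)|$. In both cases $|k_1-k_2|\ge\delta/2$ or $|k_2-k_3|\ge\delta/2$. The paper then splits $M$ with a smooth cutoff $\rho_\delta(k_1-k_2)$, equal to $1$ for $|k_1-k_2|\le\delta/4$ and supported in $|k_1-k_2|<\delta/2$:
\begin{itemize}
\item The piece $M(1-\rho_\delta(k_1-k_2))$ is handled by the $(1,2),(3,4)$ grouping.
\item For the piece $M\rho_\delta(k_1-k_2)$, it writes $\rho_\delta(k_1-k_2)=\int\widehat{\rho_\delta}(\tau)e^{2\pi i\tau k_1}e^{-2\pi i\tau k_2}\,d\tau$. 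These modulations are translations of $f_1,f_2$, and $\|\widehat{\rho_\delta}\|_{L^1}=\|\widehat\rho\|_{L^1}$ is independent of $\delta$. This restores a factorized symbol supported in $|k_2-k_3|\ge\delta/2$, to which the $(1,4),(2,3)$ grouping and Corollary~\ref{cor:bil-op} apply.
\end{itemize}
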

For notational brevity, let
        \begin{align*}
            \cD =& \{ \bk \in \Z^4 \mid k_1 -k_2 + k_3 - k_4 = 0\}.
        \end{align*}
        We again follow \cite{EHL11} and motivated by \eqref{eq:fourier-eq}, for a multiplier 
        $ M : \Z^4 \to \R  $
        we define
        \begin{align}
        \label{KMeqn}
            & \K^{\cD}_{M}(f_1, f_2, f_3, f_4) \defe \\
            & \hspace{.5cm} L^{-1}\int_0^1 \sum_{\bk \in \cD} e^{\frac{4\pi^2i(k_1^2-k_2^2+k_3^2-k_4^2)r}{L^2}}M(\mathbf{k})\left[\prod_{j=1}^4 \mathscr{C}^j\hat{f_j}(k_j)\right] \dr  ,\notag
        \end{align}
        and similarly we define $\K^{\cD'}_{M}$ for any subset
        $\cD'$. 
        Recall from Section \ref{S:notation} that the phase $\Phi(\bk)=2(k_1-k_4)(k_4-k_3)$ degenerates exactly on $\cD_{14}\cup\cD_{34}$, where
        $\cD_{ij}=\{\bk \in \cD : k_i=k_j\}$.
        With this notation,
\begin{equation}\label{E:qk}
            \tilde{\cQ}_{\mu, \eps} =\K^{\cD}_{M_{\mu,\eps}}
        \end{equation}
        where 
        \begin{equation}\label{E:Qmueps_multiplier}
              M_{\mu, \eps}(\mathbf{k}) \defe e^{F_{\mu,\eps}(k_1)-F_{\mu,\eps}(k_2)-F_{\mu,\eps}(k_3)-F_{\mu,\eps}(k_4)}.  
        \end{equation}
        \begin{lem}[Multiplier bounds]\label{L:Qmuepsmultbounds}
            Let $M_{\mu, \eps}(\bk)$ be the multiplier given in \eqref{E:Qmueps_multiplier}. Then we have that $\|M_{\mu, \eps}(\bk)\|_{\ell^\infty(\cD)} \leq 1$. 
        \end{lem}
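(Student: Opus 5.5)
The plan is to reduce the bound to a subadditivity property of $F=F_{\mu,\eps}$ on the constraint set $\cD$. Since the exponential is increasing, $M_{\mu,\eps}(\bk)\leq 1$ is equivalent to
\[
F(k_1)\leq F(k_2)+F(k_3)+F(k_4) \qquad\text{for all } \bk\in\cD .
\]
On $\cD$ we have $k_1=k_2-k_3+k_4$. So it suffices to show that the function $t\mapsto F(t)=\mu\frac{|t|}{1+\eps|t|}$ is subadditive on $\Z$ (indeed on $\R$), i.e.\ $F(a+b)\leq F(a)+F(b)$, and that it is even. Given these, we get
\[
F(k_1)=F(k_2+(-k_3)+k_4)\leq F(k_2)+F(-k_3)+F(k_4)=F(k_2)+F(k_3)+F(k_4).
\]

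For subadditivity, write $F(t)=\mu\,h(|t|)$ with $h(x)=\frac{x}{1+\eps x}$ for $x\geq 0$. The function $h$ is nondecreasing on $[0,\infty)$, since $h'(x)=(1+\eps x)^{-2}>0$. It is also subadditive there, because $h$ is concave with $h(0)=0$. Alternatively, one can check directly that
\[
\frac{x}{1+\eps x}+\frac{y}{1+\eps y}\geq \frac{x}{1+\eps(x+y)}+\frac{y}{1+\eps(x+y)}=\frac{x+y}{1+\eps(x+y)} .
\]
Combining these with the triangle inequality $|a+b|\leq|a|+|b|$ gives
\[
h(|a+b|)\leq h(|a|+|b|)\leq h(|a|)+h(|b|).
\]
Evenness of $F$ is immediate. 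The degenerate case $\mu=0$ or $\eps=0$ is covered by the same computation, because for $\eps=0$ the function $h(x)=x$ is linear and hence still subadditive.

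Every step here is elementary. The only point needing care is the monotonicity-plus-concavity argument that lets the triangle inequality pass through the nonlinear function $h$. One also has to note that the sign pattern of $\cD$ puts $k_1$ alone on one side, which matches the single $+F$ in $M_{\mu,\eps}$. Finally, since $M_{\mu,\eps}>0$, the bound $M_{\mu,\eps}\leq1$ gives $\|M_{\mu,\eps}\|_{\ell^\infty(\cD)}\leq 1$.
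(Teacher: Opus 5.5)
Your proof is correct and follows essentially the same route as the paper: reduce $M_{\mu,\eps}\leq 1$ to subadditivity of $x\mapsto \frac{|x|}{1+\eps|x|}$, then apply it twice using $k_1=k_2-k_3+k_4$ on $\cD$. The only difference is how the elementary inequality is checked: the paper clears denominators and uses the triangle inequality, while you use monotonicity plus concavity of $h$ (or your direct comparison), which is equally elementary.
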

        \begin{proof}
           To see the above claim, we use that for any $k_1, k_2 \in \mathbb{R}$, we have
            \begin{equation}
                \label{eqn:tri}
                 \frac{ |k_1 - k_2| }{1 + \eps  |k_1 - k_2| } \leq     \frac{ |k_1| }{1 + \eps  |k_1 | } +     \frac{ |k_2| }{1 + \eps  |k_2| }
                 \end{equation}
            and iterate.  In particular, using \eqref{eqn:tri}, we have for $k_1,k_2,k_3 \in \mathbb{R}$
            \begin{align}
                \label{eqn:quad}
            \frac{ |k_1 - k_2+k_3| }{1 + \eps  |k_1 - k_2 + k_3| } & \leq     \frac{ |k_1-k_2| }{1 + \eps  |k_1 - k_2 | } +     \frac{ |k_3| }{1 + \eps  |k_3| } \\ 
            & \leq  \frac{ |k_1| }{1 + \eps  |k_1 | } + \frac{ |k_2| }{1 + \eps  |k_2 | } +     \frac{ |k_3| }{1 + \eps  |k_3| }. \notag
            \end{align}
            
            Inequality \eqref{eqn:tri} is easily seen to be equivalent by finding a common denominator and recognizing that an equivalent formulation is
            \[
                |k_1 - k_2|  (1 + \eps |k_1|) (1 + \eps |k_2| )  \leq   (|k_1| + |k_2 | + 2 \eps |k_1| |k_2|)( 1 + \eps  |k_1 - k_2| ),
            \]
        which follows easily from the triangle inequality. 
        
        Hence, given $k_1 = k_2 - k_3 + k_4$, we observe that
        \[
        e^{F_{\mu,\epsilon}(k_1) - F_{\mu,\epsilon}(k_2) - F_{\mu,\epsilon}(k_3) -F_{\mu,\epsilon}( k_4)} \leq 1,
        \]
        which completes the proof.
    \end{proof}

            \begin{proof}[Proof of Proposition \ref{P:sepsupportsbound}]
By \eqref{E:qk}, it suffices to prove the bound for $\K^{\cD}_{M_{\mu,\eps}}$.

From now on we suppress the subscript $\mu,\eps$ here and write $M=M_{\mu,\eps}$ and $F=F_{\mu,\eps}$.
From \eqref{eqn:tri} we deduce that the even and non-negative function $F$ is subadditive, therefore
\begin{align*}
0<\sigma_1(k_1,k_2)&:=e^{F(k_1)-F(k_2)-F(k_1-k_2)}\leq 1, \\0<\sigma_2(k_3,k_4)&:=e^{F(k_4-k_3)-F(k_3)-F(k_4)}\leq 1.
\end{align*}
On $\cD$, using $k_1-k_2=k_4-k_3$, we therefore have
\[M(\bk)=\sigma_1(k_1,k_2)\sigma_2(k_3,k_4),\]
each separately bounded by $1$. Similarly, $M(\bk)=\tilde{\sigma}_1(k_1,k_4)\tilde{\sigma}_2(k_2,k_3)$, for
\[
\tilde{\sigma}_1(k_1,k_4):=e^{F(k_1)-F(k_4)-F(k_1-k_4)}, \; \tilde{\sigma}_2(k_2,k_3):=e^{F(k_2-k_3)-F(k_2)-F(k_3)},
\]
both of which are non-negative and bounded by $1$.

By Cauchy-Schwarz, we obtain
\begin{equation}\label{eq:bil-split}
        |\K^{\cD}_{M}(f_1,f_2,f_3,f_4)|\lesssim \|B_{\sigma_1}(f_1,f_2)\|_{L^2_{r,x}}  \|B_{\sigma_2}(f_3,f_4)\|_{L^2_{r,x}}
\end{equation}
Now, Corollary \ref{cor:bil-op} proves the claim in the case $\delta\leq L$, since $|\sigma_1|,|\sigma_2|\leq 1$.

For the stronger bounds under the separation hypothesis, we need to distinguish cases:

In Case i) where $\{k,\ell\}=\{1,2\}$ or $\{k,\ell\}=\{3,4\}$, estimate \eqref{eq:bil-split} and Corollary \ref{cor:bil-op} imply the claim, too.

In Case ii) where $\{k,\ell\}=\{1,4\}$ or $\{k,\ell\}=\{2,3\}$, we similarly use Cauchy-Schwarz
\begin{equation}\label{eq:bil-split2}
        |\K^{\cD}_{M}(f_1,f_2,f_3,f_4)|\lesssim \|B_{\tilde{\sigma}_1}(f_1,f_4)\|_{L^2_{r,x}}  \|B_{\tilde{\sigma}_2}(f_2,f_3)\|_{L^2_{r,x}}
\end{equation}
and then Corollary \ref{cor:bil-op}.

The remaining cases are iii) a) $\{k,\ell\}=\{1,3\}$ or iii) b) $\{k,\ell\}=\{2,4\}$, where we need to do a different splitting. In Case iii) a) we observe that $\delta\leq |(k_1-k_2)+(k_2-k_3)|$ within the total Fourier support, and in Case b), because of $k_1-k_4=k_2-k_3$, we observe that $\delta\leq |(k_1-k_2)-(k_2-k_3)|$. Now, both cases we therefore either have  $|k_1-k_2|\geq \delta/2$ or $|k_1-k_2|< \delta/2$ where $|k_2-k_3|\geq \delta/2$ holds.

Fix a bump function $\rho$, i.e. an even $\rho\in C^\infty(\R)$, $0\leq \rho\leq 1$, supported in $(-2,2)$ and equal to $1$ on $[-1,1]$, and define $\rho_\delta(\tau):=\rho(4\tau /\delta)$.
Then, we decompose $M(\bk)=M_1(\bk)+M_2(\bk)$, where $$ M_1(\bk)=M(\bk)(1-\rho_\delta(k_1-k_2)), \; M_2(\bk):=M(\bk)\rho_\delta(k_1-k_2).$$ For the first term, since the symbol $\sigma_1(k_1,k_2)(1-\rho_\delta(k_1-k_2))$ is supported in $|k_1-k_2|\ge \delta/4$,
the above  bound \eqref{eq:bil-split} and  Corollary \ref{cor:bil-op}  imply the claim. Within the support of the second term we have $|k_1-k_2|< \delta/2$, which implies $|k_2-k_3|\geq \delta/2$ and we would like to use \eqref{eq:bil-split2} and Corollary \ref{cor:bil-op}, but the multiplier $M_2$ with the cutoff cannot be factorized accordingly. Therefore, we first write
\[\rho_\delta(k_1-k_2)=\int \widehat{\rho_\delta}(\tau) e^{2\pi i \tau k_1} e^{-2\pi i \tau k_2}d\tau.\]
Then, with $M_2^\sharp(\bk):=\mathbf{1}_{|k_2-k_3|\geq \delta/2} M(\bk)$, and $\tilde{\sigma}_2^\sharp(k_2,k_3):=\mathbf{1}_{|k_2-k_3|\geq \delta/2}\tilde{\sigma}_2(k_2,k_3)$,
\begin{align*}
  |\K^{\cD}_{M_2}(f_1,f_2,f_3,f_4)|\leq  \int  |\widehat{\rho_\delta}(\tau)| |\K^{\cD}_{M_2^\sharp }(f_1(\cdot -\tau L),f_2(\cdot -\tau L) ,f_3,f_4)|d\tau \\\leq
  \|\widehat{\rho}\|_{L^1}\sup_\tau  \|B_{\tilde{\sigma}_1}(f_1(\cdot -\tau L),f_4)\|_{L^2_{r,x}}  \|B_{\tilde{\sigma}_2^\sharp}(f_2(\cdot -\tau L),f_3)\|_{L^2_{r,x}},
\end{align*}
where we have used that $\|\widehat{\rho_\delta}\|_{L^1}=\|\widehat{\rho}\|_{L^1}$.
Now, since translation affects neither the Fourier support nor the $L^2$ norm, Corollary \ref{cor:bil-op} implies the claim.
 \end{proof}

\begin{remark}
    The degenerate part of the phase does not cost anything, since in the degenerate sets $\cD_{14}$, $\cD_{34}$ and their intersection the integration in $r$ is trivial, the multiplier is bounded by $1$, and two applications of Cauchy-Schwarz give the upper bound $L^{-1}\prod_{j = 1}^4 \|f_j\|_{L^2}$, which is better than \eqref{eqn:freqsep}. So, no renormalization (such as Wick ordering) is needed in the argument.
\end{remark}
 
Next, we set up the bootstrap argument in \cite{EHL11} by first establishing an \textit{a priori} estimate. Before we state the lemma, we define
\begin{equation*}
\|f\|_{\mu, \eps} = \|e^{F_{\mu, \eps}(P)}f\|_{L^2(\T_L)}. 
\end{equation*}

For $\tau > 3$ define the following quantities:
        \begin{align*}
        f_{\ll} &= \F^{-1} \bone_{[\tau/3]}(k) \hat{f}(k),\ \ f_< = \F^{-1} \bone_{[\tau]}(k) \hat{f}(k), \\
        f_> &= \F^{-1} (1-\bone_{[\tau]}(k))\hat{f}(k), \ \   f_\sim = f_< - f_\ll,
    \end{align*}
    where $[\cdot ]\subset \Z$ is the symmetric set of integers defined in Section \ref{S:notation}. Note that $\widehat{f}_>$ and $\widehat{f}_\ll$ are separated by at least $\frac23 \tau$.
    
\begin{lem}[Consistency estimate]\label{L:selfconsest}
    Let $f\in L^2(\T_L)$ with $\|f\|_{L^2}=1$ be a weak solution, for some $\omega>0$, of the dispersion-managed equation
    \[\omega \langle \phi,f\rangle= \cQ(\phi,f,f,f) \quad \forall \phi \in L^2(\T_L),\]
    Then, for all $\tau>3$ and all $\mu,\eps\geq 0$ we have the following self-consistency estimate
    \begin{equation}
    \omega \|f_>\|_{\mu, \eps} \lesssim \begin{multlined}[t]
            \| f_> \|_{\mu, \eps}^3 +  e^{\mu \tau}\| f_> \|_{\mu, \eps}^2 \\+ e^{2\mu\tau} \|f_> \|_{\mu, \eps}\left(\big(\Delta_L(\tau)\big)^{\frac12}+ \|f_\sim\|_{L^2}\right) \\
            + e^{3\mu\tau} \left( \big(\Delta_L(\tau)\big)^{\frac12}+ \|f_\sim\|_{L^2}\right).
        \end{multlined}
    \end{equation}
\end{lem}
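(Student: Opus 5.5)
We follow the strategy of \cite[Lemma 3.1]{EHL11}, using Proposition \ref{P:sepsupportsbound} in place of the real-line bilinear bound. By duality,
\[
\|f_>\|_{\mu,\eps}=\sup |\langle e^{F_{\mu,\eps}(P)}\phi, f\rangle|,
\]
where the supremum runs over $\phi$ with $\|\phi\|_{L^2}=1$ and $\supp{\hat\phi}\subset\Z\setminus[\tau]$. Testing the weak equation with $e^{F(P)}\phi$ gives
\[
\omega\langle e^{F(P)}\phi,f\rangle=\cQ(e^{F(P)}\phi,f,f,f)=\tilde{\cQ}_{\mu,\eps}(\phi,h,h,h), \qquad h:=e^{F(P)}f.
\]
Strictly speaking $h$ may not be in $L^2$ when $\eps=0$; we treat $\eps>0$ first, where $e^{F}\le e^{\mu/\eps}$ is bounded, and note that all constants are uniform in $\eps$. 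We then split $h=h_\ll+h_\sim+h_>$ according to the cutoffs defining $f_\ll,f_\sim,f_>$. Since $F_{\mu,\eps}(k)\le\mu|k|$, we have
\[
\|h_>\|_{L^2}=\|f_>\|_{\mu,\eps},\qquad \|h_\sim\|_{L^2}\le e^{\mu\tau}\|f_\sim\|_{L^2},\qquad \|h_\ll\|_{L^2}\le e^{\mu\tau/3}\le e^{\mu\tau}.
\]
Note also that $\|h_<\|_{L^2}\le e^{\mu\tau}$.

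Expanding by multilinearity gives 27 terms $\tilde\cQ_{\mu,\eps}(\phi,h_a,h_b,h_c)$ with $a,b,c\in\{\ll,\sim,>\}$, which we group by the number of $h_>$ entries.

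\textbf{Three $>$ entries.} Proposition \ref{P:sepsupportsbound} with $\delta=0$ bounds this term by $c_0\|f_>\|_{\mu,\eps}^3$.

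\textbf{Two $>$ entries.} Here the remaining entry lies in $h_<$, so the crude bound gives $e^{\mu\tau}\|f_>\|_{\mu,\eps}^2$.

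\textbf{One $>$ entry.} If either of the other two entries is $h_\sim$, the crude bound gives $e^{2\mu\tau}\|f_>\|_{\mu,\eps}\|f_\sim\|_{L^2}$. Otherwise both are $h_\ll$, and the $>$ entry is separated from them by at least $\tfrac23\tau$. Proposition \ref{P:sepsupportsbound} then gives $(\Delta_L(2\tau/3))^{1/2}\lesssim(\Delta_L(\tau))^{1/2}$, since $\Delta_L(\delta)$ changes by at most a bounded factor under $\delta\mapsto\tfrac23\delta$. This produces the term $e^{2\mu\tau}\|f_>\|_{\mu,\eps}\Delta_L(\tau)^{1/2}$.

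\textbf{No $>$ entries.} If any entry is $h_\sim$ we get $e^{3\mu\tau}\|f_\sim\|_{L^2}$. The term $\tilde\cQ_{\mu,\eps}(\phi,h_\ll,h_\ll,h_\ll)$ vanishes by Lemma \ref{E:quasiloc}: $\hat\phi$ lives on $|k|>\tau$ and the others on $|k|\le\tau/3$, and the lemma is stated for exactly this support configuration (up to the inessential distinction between $\geq 3s$ and $>3s$). In any case, this term could alternatively be bounded via separation by $e^{3\mu\tau}\Delta_L(\tau)^{1/2}$, which already appears in the claimed estimate.

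Summing these bounds, taking the supremum over $\phi$, and finally letting $\eps$ be arbitrary yields the claim.

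The main obstacle is bookkeeping: confirming that each of the 27 terms fits one of the four categories with the stated exponential weights. The key points are the bound $F(k)\le\mu\tau$ on $[\tau]$, and using separation (rather than the crude bound) precisely for the terms with the $\ll$ entries, so that the constants coming from Proposition \ref{P:sepsupportsbound} remain uniform in $\mu$ and $\eps$.
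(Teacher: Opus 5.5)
Your argument is correct and is essentially the paper's proof. The paper tests with $e^{2F_{\mu,\eps}(P)}f_>$ and divides by $\|h_>\|_{L^2}$, which is your duality step with $\phi=h_>/\|h_>\|_{L^2}$, and it controls the terms with Proposition~\ref{P:sepsupportsbound} in the same way, using the $\tfrac23\tau$ separation exactly for the $h_\ll$ contributions.

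One small correction: because of the ceilings in $[\tau/3]$ and $[\tau]$, Lemma~\ref{E:quasiloc} does not apply to $\tilde{\cQ}_{\mu,\eps}(\phi,h_\ll,h_\ll,h_\ll)$ in general. For $\tau=3.1$, the choice $k_2=k_4=2$, $k_3=-2$ gives $k_1=6\notin[\tau]$, so this term need not vanish; keep your fallback bound via separation, which is what the paper uses.
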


\begin{proof}
The proof relies on the estimate from Proposition \ref{P:sepsupportsbound}.
Test the equation by $\phi = e^{2F_{\mu, \eps}(P)}f_>$. Then,
    \begin{equation}
        \omega \|f_>\|_{\mu, \eps}^2 = \tilde{\cQ}_{\mu, \eps}\left(e^{F_{\mu, \eps}(P)}f_>, e^{F_{\mu, \eps}(P)}f, e^{F_{\mu, \eps}(P)}f, e^{F_{\mu, \eps}(P)}f\right).
    \end{equation}
    Setting $h = e^{F_{\mu, \eps}(P)}f$, the above display reads 
    \begin{equation}
        \omega \|h_>\|_{L^2}^2 = \tilde{\cQ}_{\mu, \eps}(h_>, h, h, h).
    \end{equation}
    We now split $h = h_< + h_>$, and use the multilinearity to split the right-hand side up into 
    \begin{align*}
        \tilde{\cQ}_{\mu, \eps}(h_>, h, h,h) &= \tilde{\cQ}_{\mu, \eps}(h_>, h_> + h_< , h_> + h_< , h_> + h_<) \\
        &= \tilde{\cQ}_{\mu, \eps}(h_>, h_>, h_>, h_>) + \tilde{\cQ}_{\mu, \eps}(h_>, \text{$3$ low}) \\
        &+ \tilde{\cQ}_{\mu, \eps}(h_>, \text{$2$ high, $1$ low}) + \tilde{\cQ}_{\mu, \eps}(h_>, \text{$1$ high, $2$ low}),
    \end{align*}
    where we have identified extra terms that only differ by a permutation of the last three entries, as their estimates are identical. Every term below is estimated by Proposition \ref{P:sepsupportsbound} as follows:
    when no separation is present, with $\delta=0$, and for the contributions  of $h_\ll$, with $\delta=\frac23 \tau$.
    For the first term, taking $\delta=0$, we obtain
    \begin{equation}
        |\tilde{\cQ}_{\mu, \eps}(h_>, h_>, h_>, h_>)| \lesssim \|h_> \|_{L^2}^4.
    \end{equation}
    We then bound 
    \begin{equation}
        |\tilde{\cQ}_{\mu, \eps}(h_>, \text{$2$ high, $1$ low})| \lesssim   \|h_> \|_{L^2}^3 \|h_< \|_{L^2}
    \end{equation}
    in the same way.

    For the remaining terms (which all contain at least one copy of $h_<$), we split $h_< = h_\ll + h_\sim$. Using multilinearity again, we bound the remaining $2$ kinds of terms as follows:

    \begin{align}
       &|\tilde{\cQ}_{\mu, \eps}(h_>, \text{$3$ low})| = |\tilde{\cQ}_{\mu, \eps}(h_>, \text{$2$ low}, h_\ll + h_\sim)| \nonumber\\
       &\lesssim |\tilde{\cQ}_{\mu, \eps}(h_>, \text{$2$ low}, h_\ll)| + |\tilde{\cQ}_{\mu,\eps}(h_>, \text{$2$ low}, h_\sim)| \nonumber \\
       &\lesssim \big(\Delta_L( \tau) \big)^{\frac12}\|h_> \|_{L^2} \|h_< \|_{L^2}^2 \|h_\ll\|_{L^2} + \|h_> \|_{L^2} \|h_<\|_{L^2}^2 \|h_\sim\|_{L^2},
    \end{align}
and 
    \begin{align}
     &   |\tilde{\cQ}_{\mu, \eps}(h_>, \text{$1$ high, $2$ low})| = |\tilde{\cQ}_{\mu, \eps}(h_>, \text{$1$ high, $1$ low}, h_\ll + h_\sim)| \nonumber \\
        &\lesssim |\tilde{\cQ}_{\mu, \eps}(h_>, \text{$1$ high, $1$ low}, h_\ll)| + |\tilde{\cQ}_{\mu, \eps}(h_>, \text{$1$ high, $1$ low}, h_\sim)|\nonumber \\
        &\lesssim  \big(\Delta_L(\tau) \big)^{\frac12}\|h_>\|_{L^2}^2 \|h_<\|_{L^2} \|h_\ll\|_{L^2} + \|h_> \|_{L^2}^2 \|h_<\|_{L^2}\|h_\sim\|_{L^2}.
    \end{align}
    Summing the estimates and dividing through by $\|h_>\|_{L^2}$, we arrive at
    \begin{align}
        \omega \|h_>\|_{L^2} \lesssim{} &
            \| h_> \|_{L^2}^3 + \| h_> \|_{L^2}^2 \|h_<\|_{L^2}+ \|h_>\|_{L^2}\|h_<\|_{L^2}\|h_\sim\|_{L^2}\nonumber\\ &+ \big(\Delta_L( \tau) \big)^{\frac12} \|h_>\|_{L^2} \|h_<\|_{L^2}\|h_\ll\|_{L^2} \nonumber\\
            &+ \big(\Delta_L( \tau) \big)^{\frac12}\|h_<\|_{L^2}^2\|h_\ll\|_{L^2}  + \|h_<\|_{L^2}^2\|h_\sim\|_{L^2}.
    \end{align}
    Now note that in Fourier space, $|\widehat{h_<}|$ and $|\widehat{h_\ll}|$ are bounded by $e^{\mu \tau} |\hat{f}|$ and $|\widehat{h_\sim}|$ is controlled by $e^{\mu \tau}|\hat{f_\sim}|$. By Plancherel's theorem, we can bound 
    \begin{equation*}
        \|h_<\|_{L^2} \leq e^{\mu\tau}\|\hat{f}\|_{\ell^2}, \quad 
        \|h_\ll\|_{L^2} \leq e^{\mu\tau}\|\hat{f}\|_{\ell^2}, \quad
        \|h_\sim\|_{L^2} \leq e^{\mu\tau}\|\hat{f_\sim}\|_{\ell^2}.
    \end{equation*}
    By Plancherel, and using the condition that $\|f\|_{L^2} = 1$, we arrive at 
    \begin{align}
    \omega \|h_>\|_{L^2} \lesssim{}&
            \| h_> \|_{L^2}^3 +  e^{\mu \tau}\| h_> \|_{L^2}^2 \nonumber \\+ & e^{2\mu\tau} \|h_> \|_{L^2}\left(\big(\Delta_L( \tau) \big)^{\frac12} + \|f_\sim\|_{L^2}\right) 
            + e^{3\mu\tau} \left(\big(\Delta_L( \tau) \big)^{\frac12} + \|f_\sim\|_{L^2}\right).
    \end{align}
    By recalling the definition of $\|f\|_{\mu, \eps}$, we conclude the claim.
\end{proof}

With this self-consistency lemma in hand, we can now prove the claim that the Fourier modes of optimizers necessarily decay exponentially. We introduce some notation first. Recall that $c_0>0$ denotes the unspecified constant in \eqref{eqn:freqsep}, which is independent of $\mu,\eps\geq 0$. For $\omega>0$ define the polynomial
$$G_\omega(\nu):=\frac{\omega}{2}\nu -c_0 \nu^2-c_0 \nu^3$$
and let $\nu_{\rm{\max}}>0$ be the maximizer of $G_\omega:[0,\infty) \to \R$ and define $g_0(\omega):=G_\omega(\frac{\nu_{\rm{\max}}}{2})$, which is a positive number.
Note that $G_\omega$ is the same polynomial as in \cite[eq. (3.5)]{EHL11}.

\begin{pro}[Exponential Decay of Fourier Modes]
Let $f$ be as in Lemma \ref{L:selfconsest}, i.e. a mass-$1$ weak solution of the dispersion managed equation, for some $\omega>0$.
Suppose that $L>0$ is large enough that
\begin{equation}\label{eq:cond-L-exp}
c_0L^{-\frac12}\leq \tfrac12 \min\big(
\tfrac{\omega}{2},g_0(\omega)\big).
\end{equation}
Then, there exists $\mu=\mu(L,\omega,f)$ so that the bound
    \begin{equation}
        |\hat{f}(k)|\lesssim e^{-\mu|k|} 
    \end{equation}
    holds for all $k \in \Z$. In particular, $f$ is analytic.
\end{pro}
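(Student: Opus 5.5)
We follow the bootstrap of \cite[Theorem 2.3]{EHL11}, using Lemma \ref{L:selfconsest} with the constant written out as $c_0$. The scheme is to first choose $\tau$ and $\mu$ so that the a priori quantities are small, then run a continuity argument in $\eps$, and finally let $\eps \to 0$.

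\emph{Choice of parameters.} Since $\|f\|_{L^2}=1$, we have $\|f_\sim\|_{L^2}\le \beta_{\tau/3}(f)\to 0$ as $\tau\to\infty$. For $\tau>L^2$ we have $\Delta_L(\tau)^{1/2}=L^{-1/2}$. By \eqref{eq:cond-L-exp},
\[
c_0L^{-1/2}\le \tfrac12\min\big(\tfrac{\omega}{2},g_0(\omega)\big).
\]
We therefore fix $\tau>\max(3,L^2)$ so large that three things hold. First, $c_0\big(L^{-1/2}+\|f_\sim\|_{L^2}\big)$ lies strictly below $\min(\tfrac{\omega}{2},g_0(\omega))$, with some room to spare. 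Second, $\|f_>\|_{L^2}<\nu_{\max}/2$. Third, the slack from the first condition absorbs the small $\mu$-dependence in the next step. Having fixed $\tau$, we choose $\mu>0$ small enough (depending on $\tau$, $L$, $\omega$, $f$) that $e^{3\mu\tau}$ is so close to $1$ that
\[
e^{2\mu\tau}c_0\big(L^{-1/2}+\|f_\sim\|\big)\le \tfrac{\omega}{2}
\qquad\text{and}\qquad
e^{3\mu\tau}c_0\big(L^{-1/2}+\|f_\sim\|\big)< g_0(\omega).
\]
We also absorb the factor $e^{\mu\tau}$ on the quadratic term into $c_0$, which costs a harmless change of constant, small once $\mu\tau$ is small.

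\emph{Continuity argument.} Put $\nu(\eps):=\|f_>\|_{\mu,\eps}$. For $\eps>0$ the weight $e^{F_{\mu,\eps}}$ is bounded by $e^{\mu/\eps}$, so $\nu(\eps)$ is finite. By dominated convergence $\nu$ is continuous in $\eps\in(0,\infty)$, and $\nu(\eps)\to\|f_>\|_{L^2}<\nu_{\max}/2$ as $\eps\to\infty$. Rearranging Lemma \ref{L:selfconsest} with the above choices gives
\[
G_\omega(\nu(\eps)) = \tfrac{\omega}{2}\nu - c_0\nu^2 - c_0\nu^3 \le e^{3\mu\tau}c_0\big(L^{-1/2}+\|f_\sim\|\big) < g_0(\omega)
\]
for all $\eps>0$. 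Here the $\nu$-linear term has been moved to the left, using that its coefficient is at most $\omega/2$. Because $G_\omega$ is increasing on $[0,\nu_{\max}]$, the set $\{G_\omega<g_0\}$ splits into the disjoint pieces $[0,\nu_{\max}/2)$ and a region beyond $\nu_{\max}$. The function $\nu$ is continuous, starts in the first piece at $\eps=\infty$, and can never take the value $\nu_{\max}/2$. Hence $\nu(\eps)<\nu_{\max}/2$ for every $\eps>0$.

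\emph{Conclusion.} Letting $\eps\downarrow 0$ and applying monotone convergence gives $\|e^{\mu|P|}f_>\|_{L^2}\le \nu_{\max}/2$. The low modes $f_<$ are finitely many, so they trivially obey the same kind of bound. Together these give $|\hat f(k)|\lesssim e^{-\mu|k|}$. Analyticity on a strip of width $\sim\mu L$ follows from the Fourier series.

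The main obstacle is the bookkeeping of constants. The $e^{j\mu\tau}$ factors have to be kept close to $1$, and the rearranged inequality must match exactly the polynomial $G_\omega$ with threshold $g_0(\omega)$. This is precisely why the smallness of $L^{-1/2}$ in \eqref{eq:cond-L-exp} is needed, since it cannot be improved by enlarging $\tau$.
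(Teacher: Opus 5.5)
Your proposal is correct and follows the paper's route: the \cite{EHL11} bootstrap via Lemma \ref{L:selfconsest}, a continuity argument in $\eps$ that traps $\|f_>\|_{\mu,\eps}$ in the component of $\{G_\omega\le g_0(\omega)\}$ containing $0$, and monotone convergence as $\eps\downarrow 0$. The only differences are that the paper fixes $e^{\mu\tau}=2$, absorbs the factors $2,4,8$ into $c_0$, and starts the continuity at $\eps=1$, whereas you take $\mu\tau$ small and start at $\eps\to\infty$. Strictly, keeping $e^{\mu\tau}$ on the quadratic term means running the argument with the perturbed polynomial $\tfrac{\omega}{2}\nu-c_0e^{\mu\tau}\nu^2-c_0\nu^3$ rather than $G_\omega$. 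Its maximizer and threshold tend to $\nu_{\max}$ and $g_0(\omega)$ as $\mu\tau\to 0$, so the strict slack you built in covers this.
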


\begin{proof}
    Having established Lemma \ref{L:selfconsest}, we may now follow the proof given in \cite{EHL11} to prove exponential decay in $k$.
    
    Given $\tau >3$ (which we will choose below), choose $\mu$ so that $e^{\mu \tau} = 2$. Now, by the self-consistency estimate above, with $\nu = \| f_> \|_{\mu,\eps}$, we obtain
    \begin{equation}
        \left( \omega - c_0 \big(\Delta_L(\tau)\big)^{\frac12} - c_0  \| f_{\sim} \|_{L^2} \right) \nu - c_0 \nu^2 - c_0 \nu^3 \leq c_0 \left( \big( \Delta_L(\tau)\big)^{\frac12} + \| f_{\sim} \|_{L^2} \right) .
    \end{equation}
    Now, we choose $\tau > 3$ large enough to guarantee 
    \begin{align}
        c_0\left(\big(\Delta_L(\tau)\big)^{\frac12} +  \|f_\sim\|_{L^2} \right) &\leq \min\left(\frac{\omega}{2},g_0(\omega)\right), 
        \label{eqn:a1} \\
        &\|f_>\|_{L^2} \leq \frac{\nu_{\rm{\max}}}{4} .
        \label{eqn:a2}
    \end{align}
    This is possible because $\Delta_L(\tau)=L^{-1}$ for $\tau> L^2$, so that our assumption \eqref{eq:cond-L-exp} on $L$ takes care of the first summand, while the other terms are handled by the fact that for a given $f \in L^2(\T_L)$ we have that
    $$\lim_{\tau \to \infty}\|f_\sim\|_{L^2}=\lim_{\tau \to \infty}\|f_>\|_{L^2}= 0.$$
    We now observe from the self-consistency estimate and \eqref{eqn:a1}-\eqref{eqn:a2} that 
    \begin{equation}\label{eq:g-b}
        G_\omega(\|f_>\|_{\mu, \eps}) \leq g_0(\omega)=G_\omega(\frac{\nu_{\rm{max}}}{2}).
    \end{equation}
    We now show that for the choice of $\mu$ above, we have that $e^{\mu |\cdot|}\hat{f} \in \ell^2$. Indeed, by Plancherel and H\"older, 
    \begin{equation}
        \|\widehat{h_>}|_{\eps = 1}\|_{\ell^2} \leq \|e^{F_{\mu, 1}(k)}\|_{\ell^\infty}\|\widehat{f_>}\|_{\ell^2} < \frac{\nu_{\rm{max}}}{2 }.
    \end{equation}
    Hence, we have that $\|f_>\|_{\mu, 1} < \frac{\nu_{\rm{max}}}{2}$.
    We then use the continuity argument as in \cite{EHL11}: the map $\eps \mapsto \|f_>\|_{\mu, \eps}$ is continuous on the interval $(0,1]$ we have \eqref{eq:g-b} holds for every $\eps \in (0,1]$, so $\|f_>\|_{\mu, \eps}$ has to stay in the same connected component of $G_\omega^{-1} (I )$ for $I=[0, g_0(\omega)]$ as $\|f_>\|_{\mu, 1}$, which is $[0,\frac{\nu_{\rm{max}}}{2}]$. Using the monotone convergence theorem and Plancherel's theorem, we conclude that 
    \begin{equation}
        \|e^{F_{\mu, 0}(k)}\hat{f}(k)\|_{\ell^2}\leq \frac{\nu_{\rm{max}}}{2 },
    \end{equation}
    which proves the claim. The pointwise estimates follow in the same way as in \cite{EHL11}, by using $1d$ Sobolev embedding.
\end{proof}
We remark that the condition \eqref{eq:cond-L-exp} becomes weaker if $\omega$ grows. An easy computation shows that $\nu_{\rm{max}}$ is strictly increasing in $\omega$, then that $g_0(\omega)$ is strictly increasing, which implies that $\omega \mapsto \min(\tfrac{\omega}{2},g_0(\omega))$ is strictly increasing.

\section{Asymptotics as $L \to \infty$}
\label{sec:largeL}

In this section, we analyze the behaviour of $\omega_1(L)$ as $L \to \infty$. We will prove an asymptotic lower bound.

The optimal constant $\omega_1^*$ was introduced in \eqref{maxdefR}. In this section, we abuse notation and use $\cQ$ for the functional on the line, so that

$$\omega_1^*=\sup_{\|\phi\|_{L^2}=1} \cQ(\phi,\phi,\phi,\phi)=\max_{\|\phi\|_{L^2}=1} \int_0^1\int_\R \Big|T_r(\phi)\Big|^4 dxdr.$$
By the $L^4$ Strichartz estimate on $[0,1]\times \R$ it is finite, it is obviously positive and the maximum is attained, see \cite{HL-var,HL12b,HL09,EHL11}.

\begin{pro}\label{pro:unif-lower}
The periodic constant is asymptotically bounded below by the constant on the line. We have
$$\liminf_{L\to \infty} \omega_1(L)\geq \omega_1^*>0.$$
\end{pro}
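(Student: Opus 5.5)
The plan is to use a line maximizer, localized and periodized, as a test function on $\T_L$. Fix $\eta>0$. Since $\omega_1^*$ is attained and Schwartz functions are dense in $L^2(\R)$, while $\cQ$ is continuous on $L^2(\R)$ by the $L^4$ Strichartz estimate, we can pick $\phi\in\mathcal S(\R)$ with $\|\phi\|_{L^2(\R)}=1$ and $\cQ(\phi,\phi,\phi,\phi)\geq \omega_1^*-\eta$. We may moreover take $\hat\phi$ compactly supported, say in $[-R,R]$ on the line.

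We then define the periodization
\[
\phi_L(x)=\sum_{j\in\Z}\phi(x+jL)
\]
and normalize it. By Poisson summation, its Fourier coefficients in the paper's convention are samples $\hat\phi_L(k)=L^{-1/2}\hat\phi(k/L)$, up to a $2\pi$ normalization. Two facts follow.

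First, the mass converges: $\|\phi_L\|_{L^2(\T_L)}^2=L^{-1}\sum_k|\hat\phi(k/L)|^2\to\|\hat\phi\|^2_{L^2}=1$ as a Riemann sum.

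Second, the free evolution commutes with periodization, so $T_r\phi_L=\sum_j (T_r\phi)(\cdot+jL)$. We need $T_r\phi$ to decay uniformly for $r\in[0,1]$. This holds because $T_r\phi$ stays in a bounded set of $\mathcal S$ for $r\in[0,1]$; in particular $|T_r\phi(x)|\lesssim\langle x\rangle^{-2}$ uniformly in $r$.

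Next we compare the $L^4$ norms. On the fundamental domain $[-L/2,L/2)$, write $T_r\phi_L=T_r\phi+E_{r,L}$ with $E_{r,L}=\sum_{j\neq0}(T_r\phi)(\cdot+jL)$. For $|x|\le L/2$ we have $|E_{r,L}(x)|\lesssim\sum_{j\ne0}(|j|L/2)^{-2}\lesssim L^{-2}$, hence $\|E_{r,L}\|_{L^4([-L/2,L/2])}\lesssim L^{-2+1/4}\to0$ uniformly in $r$. Also $\int_{|x|>L/2}|T_r\phi|^4\to0$ uniformly in $r$. Applying Minkowski in $L^4([0,1]\times[-L/2,L/2])$ then gives
\[
\cQ_{\T_L}(\phi_L,\phi_L,\phi_L,\phi_L)\to\cQ_\R(\phi,\phi,\phi,\phi).
\]
Dividing by $\|\phi_L\|^4\to1$, we obtain $\liminf_L\omega_1(L)\ge\omega_1^*-\eta$. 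Since $\eta>0$ was arbitrary, the claim follows. Positivity of $\omega_1^*$ is immediate: for any normalized $\phi$, $\cQ(\phi,\phi,\phi,\phi)=\|T_r\phi\|_{L^4}^4>0$.

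The main obstacle is the uniform-in-$r$ spatial decay of $T_r\phi$, which is what controls the tails and the periodization error. It follows from $x\,T_r\phi=T_r\big((x+2ir\partial_x)\phi\big)$, or an equivalent identity depending on the sign convention, combined with Sobolev embedding. This is where choosing $\phi$ Schwartz, rather than a general $L^2$ maximizer, is essential.
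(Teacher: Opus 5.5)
Your proof is correct and uses the same periodization argument as the paper. You periodize a line function and use that $T_r$ commutes with periodization. You then control the periodization error and the tails uniformly in $r\in[0,1]$, and compare masses and $L^4$ norms.

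The difference is the test function.
\begin{itemize}
\item The paper periodizes an actual maximizer $\psi$. It relies on the cited results that $\psi$ exists and that $\psi,\hat\psi$ decay exponentially, which gives $O_N(L^{-N})$ errors.
\item You periodize a Schwartz near-maximizer and then let $\eta\to 0$. This needs only the $L^4$ Strichartz bound on $[0,1]\times\R$, for continuity of $\cQ$ on $L^2(\R)$, plus density.
\end{itemize}
Your version is therefore more self-contained. It avoids both the existence and the regularity theory of the line maximizer, so your remark that $\omega_1^*$ is attained is not actually needed.
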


\begin{proof}
Let $\psi$ be a maximizer for $ \omega_1^*$. By \cite{HL-var,HL12b,HL09,EHL11}, it exists, it is analytic and $\psi$ and $\hat{\psi}$ decay exponentially. Let
$$\psi_L(x):=\sum_{k \in \Z} \psi(x+kL)$$
be its periodization with period $L$. If $T_r$ denotes the free propagator, both on $\T_L$ and on $\R$, then we observe that
$T_r \psi_L=(T_r\psi)_L$, so the periodization commutes with the free evolution. In addition, for any $N\in \N$,
$$\sup_{r \in [0,1], |x|\leq L/2}|T_r \psi_L(x)-T_r\psi(x)|=\sup_{r \in [0,1], |x|\leq L/2}|\sum_{k\ne 0} (T_r\psi)(x+kL)|\lesssim_N L^{-N}.$$
Also, for any $p\geq 1$, we have $\sup_{r \in [0,1]}\|T_r\psi\|_{L^p(|x|\geq L/2)}\lesssim_N L^{-N}$. As a consequence, we obtain
$$\lim_{L\to \infty}\|T_r (\psi_L)\|_{L^4([0,1]\times \T_L)}=\|T_r \psi\|_{L^4([0,1]\times \R)}$$
and $\lim_{L \to \infty}\|\psi_L\|_{L^2(\T_L)}=\|\psi\|_{L^2(\R)}=1$.
Define $\phi_L:=\|\psi_L\|_{L^2(\T_L)}^{-1}\psi_L$. Then,
$$\omega_1(L)\geq \cQ(\phi_L,\phi_L,\phi_L,\phi_L)\to \omega_1^\ast \text{ as }L \to \infty,$$
and the proof is complete.
\end{proof}

\begin{remark}\label{rem:cons}
    \begin{enumerate}
    \item
    On $\T_L$, for the mass-$1$ constant solution $z_L=L^{-\frac12}$ we have
    $\cQ(z_L,z_L,z_L,z_L)=L^{-1}$.
    \item For sufficiently large $L$, the constant $z_L$ cannot be a maximizer. This proves Theorem \ref{thm:nontrivial}.
    \item 
We conjecture that we actually have convergence $\lim_{L \to \infty}\omega_1(L)=\omega_1^*$, but a sharp upper bound remains open. A (probably non-sharp) upper bound for $T=2\pi$ is provided in Subsection \ref{subsec:ub} below.
\end{enumerate}
\end{remark}

\subsection{An illustrative calculation for $T = 2 \pi$, $L = 2 \pi$}\label{subsec:ub}
Here, we follow the argument of Bourgain \cite{B93}, which is based on the construction of Zygmund \cite{zygmund1974fourier}. Letting
$u(x,t) = \frac{1}{\sqrt{2 \pi}} \sum_{k \in \Z} e^{i k x - i k^2 t} \hat{f} (k)$, by Parseval we have
\begin{align*}
\| u \|_{L^4 ( [0,2 \pi]^2)}^4 & = \| u (x,t) \overline{u} (x,t) \|_{L^2 ([0,2 \pi]^2)}^2 
 = \sum_{\tau, n} | \widehat{ u \overline{u} } (\tau, n) |^2  \\
 &=  \sum_{\tau, n}  \Big|  \sum_{ {k_1, k_2}  \atop{ n = k_1 - k_2, \atop{ \tau = k_2^2 - k_1^2} }} \hat{f} (k_1) \overline{ \hat{f} (k_2) }  \Big|^2 \\ &
\leq 2 \left[ \left( \sum_k | \hat{f} (k) |^2 \right) \right]^2 = 2  \| f \|_{L^2}^4.
\end{align*}
In the last inequality we have used that $\tau=-n(k_1+k_2)$, so for $n\ne 0$ the two constraints determine the pair $(k_1,k_2)$ uniquely, which gives the contribution $$\sum_{k_1\ne k_2} | \hat{f} (k_1) |^2|\hat{f} (k_2) |^2\leq \big(\sum_{k} | \hat{f} (k) |^2\big)^2,$$ and $n=0$ yields $k_1=k_2$, which contributes another $\big(\sum_{k} | \hat{f} (k) |^2\big)^2$.
Hence, the optimal Strichartz constant is at most $2^{\frac14}$.  Note that the constant function $f = 1$ shows that the optimal Strichartz constant actually lies between $1$ and $2^{\frac14} $.

\section{Numerics}
\label{sec:num}

\subsection{Spectral Renormalization and Discretization}

The method we derive here is based upon the spectral renormalization algorithm first proposed in the paper \cite{AbMu-specren}.  There, the authors work with the elliptic equation of the form
\begin{equation}
-\Delta u + \mu u + V(x) u - F(|u|^2) u = 0
\end{equation}
by rewriting it in the form
\begin{equation}
 \hat u (k) = - \frac{ \mathcal{F} (V u) }{|k|^2 + \mu} + \frac{ \mathcal{F} ( N( |u|^2 u) ) }{ |k|^2 + \mu}.
 \end{equation}
 However, due to scaling symmetries implicit in any constrained minimization algorithm, this particular method is unstable.  To fix that, define $\mu \hat w = \hat u$ and set up the iterative algorithm 
 \begin{equation}
 \hat{w}_{m+1} = - \frac{ \mathcal{F} (V w_m) }{|k|^2 + \mu} + \frac{ \mathcal{F} ( N( |\mu_m^2 w_m|^2 w_m) ) }{ |k|^2 + \mu}
 \end{equation}
 where $\mu_m$ is selected such that
\begin{align}
G( \mu_m) & = \int | \hat w_m |^2 dk - \int \bar{\hat{w}} \left[  \frac{ \mathcal{F} (V w_m ) }{|k|^2 + \mu} + \frac{ \mathcal{F} ( N( | \mu_m^2 w|^2 w) ) }{ |k|^2 + \mu} \right] dk \\
&= 0.
\end{align}
Such an iterative algorithm converges quite quickly when the potential has reasonable decay in frequency space and when the elliptic part is sufficiently coercive.

To handle the case of the dispersion managed ($0$ average dispersion), we follow a similar track, though the only source of ellipticity is now that of the $Q$ functional.  To that end, we set up the algorithm as follows.  Let us attempt to solve the equation \eqref{E:0av-weak-dmnls}
\begin{equation}
\omega u = Q (u,u,u),
\end{equation}
where 
\begin{equation}
Q(u,u,u) = \int_0^1 T_r^{-1} ( |T_r u|^2 \overline{T_r u} ) dr.
\end{equation}
Looking in Fourier space, we then observe
\begin{equation}
\omega \hat u (k) = \int_0^1 e^{-i r |k|^2} \mathcal{F}\Big( \big| \mathcal{F}^{-1} \big(e^{-i r |k|^2} \hat u\big)\big|^2  \mathcal{F}^{-1} \Big( e^{-i r |k|^2} \hat u \big) \Big)dr .
\end{equation}
Again, in order to overcome scaling symmetries in the $L^2$ norm, we redefine $\hat u = \gamma \hat w$.  As a result, we can define the algorithm
\begin{equation}
\hat{w}_{m+1} = \mu^{-1} \gamma_m^2
\int_0^1 e^{-i r |k|^2} \mathcal{F}\Big( \big| \mathcal{F}^{-1} \big(e^{-i r |k|^2} \hat w_m\big)\big|^2  \mathcal{F}^{-1} \Big( e^{-i r |k|^2} \hat w_m \big) \Big)dr,
\end{equation}
where
\begin{equation}
\gamma_m^2 = \frac{ \mu    \int | \hat w_m |^2 dk }{ \int_0^1  \int | \mathcal{F}^{-1}  \big(e^{i r |k|^2} \hat w_m \big) |^4  dx dr }.
\end{equation}

Such an iterative method converges, where there are several choices one must make in the discretization.  For instance, one must choose a means of approximating the integral over $r$ in the functional $Q$.  To do so, one can either use implicit quadrature methods from {\it Matlab} and set the tolerance level to be say $tol = 1e-12$ or one can choose a discretization in the $r$ variables with $r_{\rm max}$ grid points over the interval $0 \leq r \leq 1$ and compute the integral through direct approximation.  Typically we implement the latter and set $r_{\rm max} = 1e3$.
 
\subsection{Numerical Simulations}

We have run the algorithm above with initial conditions that have symmetric, anti-symmetric and asymmetric initial inputs as periodic extensions of the function
\begin{equation}
u_0 = \alpha x e^{-(x- \gamma )^2/4}/ \sqrt{4\pi} + \beta e^{-x^2 / 4}/\sqrt{4\pi},
\end{equation}
for $\alpha = 0$, $\beta =1$ representing symmetric data and $\alpha = 1, \beta = 0$ representing anti-symmetric.  If both $\alpha,\beta \neq 0$, we are considering general periodic solutions.  
The resulting output is contained in Figures \ref{fig:dmsol1}-\ref{fig:dmsol1_comp}.  We observe that up to symmetries, generically a symmetric configuration is selected by the numerical method, unless we restrict to purely anti-symmetric initial data.  With purely anti-symmetric data, we can construct an anti-symmetric stationary solution, but it is of lower energy over all than the symmetric state selected by generic simulation.

\begin{figure}[!htbp]
\includegraphics[scale=0.3]{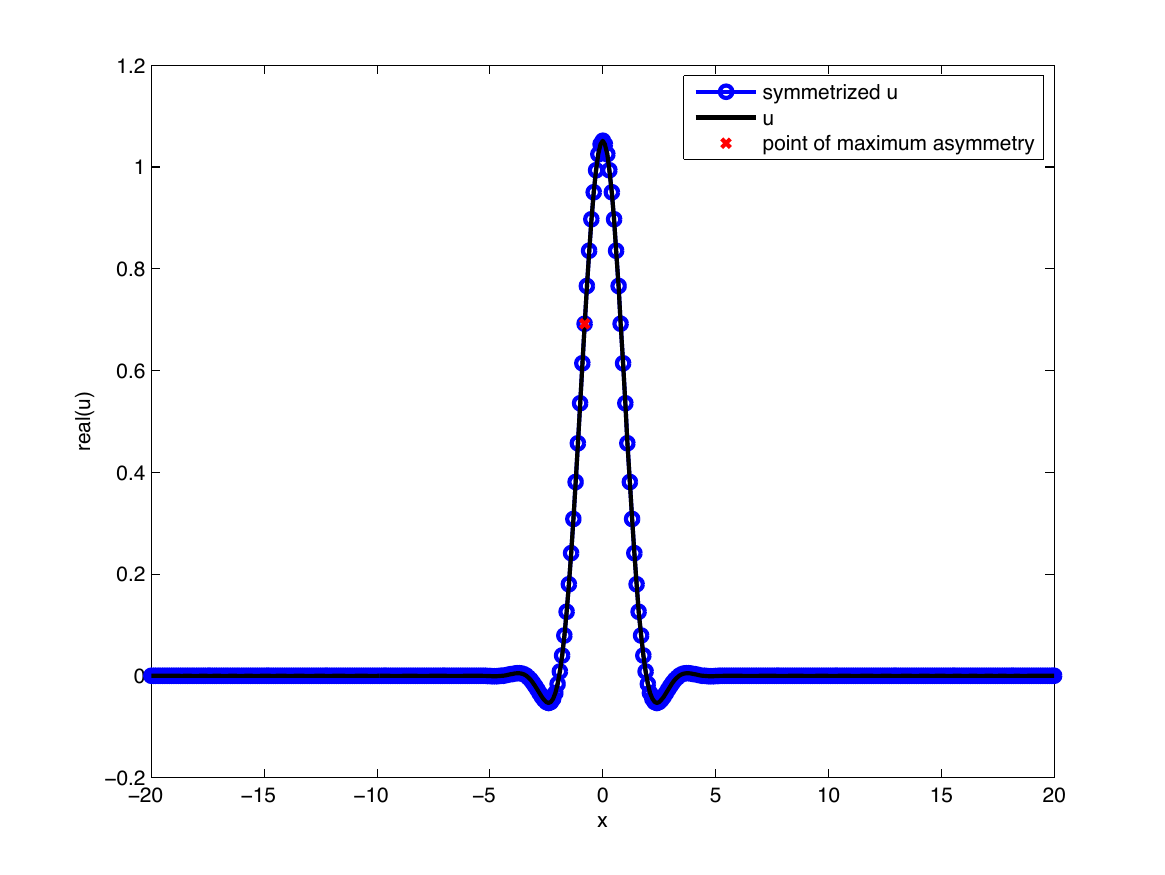}
\caption{Converged numerical soliton for $\mu = 1.0$, $L = 40.0$, $N = 200$, symmetric initial iterate ($\alpha = 0.0$, $\beta = 1.0$).  Resulting dispersion managed soliton has converged to a bound state up to order at least $1e-9$, has numerical $L^2$ norm squared $1.6806$ and is even to machine precision.}
\label{fig:dmsol1}
\end{figure}

\begin{figure}[!htbp]
\includegraphics[scale=0.3]{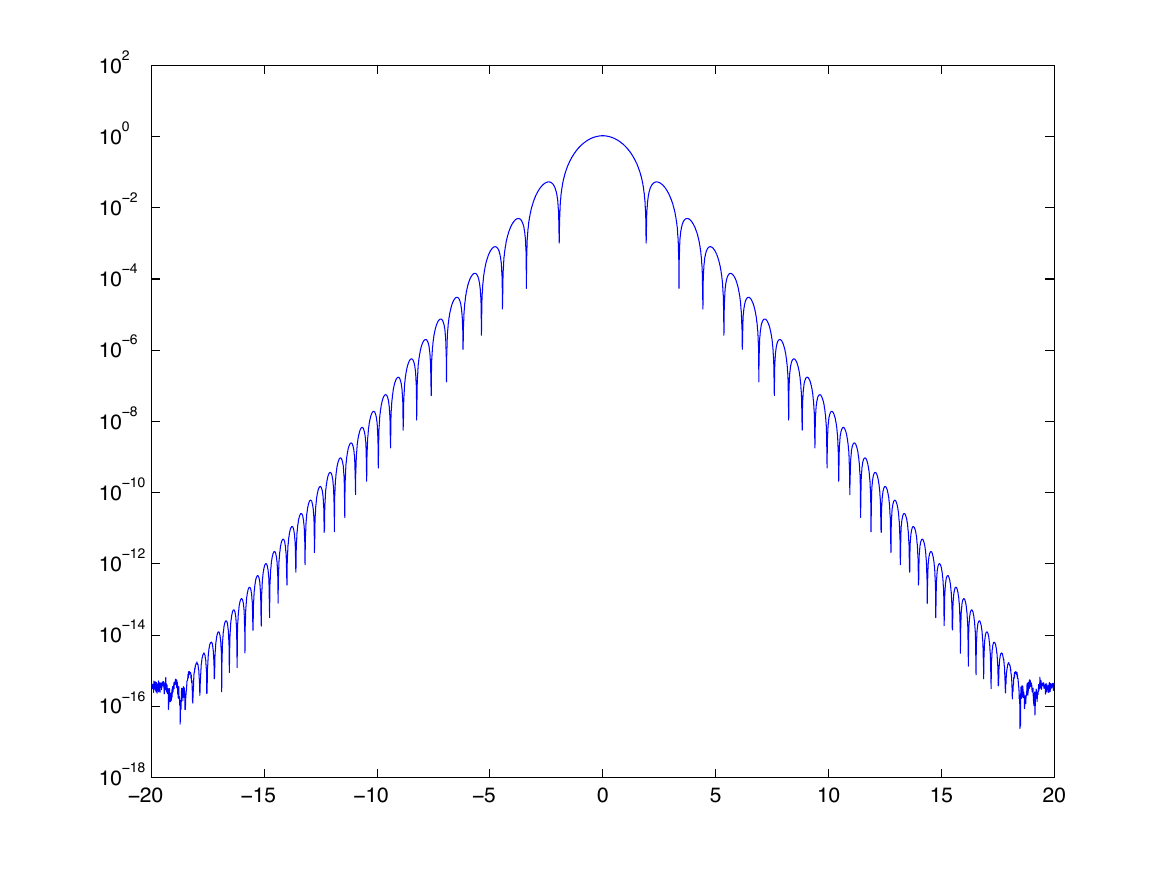}
\caption{Converged numerical soliton in a semilog plot for $\mu = 1.0$, $L = 40.0$, $N = 2000$, symmetric initial iterate ($\alpha = 0.0$, $\beta = 1.0$).  Resulting Dispersion Managed Soliton has converged to a bound state up to order at least $1e-9$, has numerical $L^2$ norm squared $1.6806$ and is even up to order $1e-15$.}
\label{fig:dmsol1_semilog}
\end{figure}

\begin{figure}[!htbp]
\includegraphics[scale=0.3]{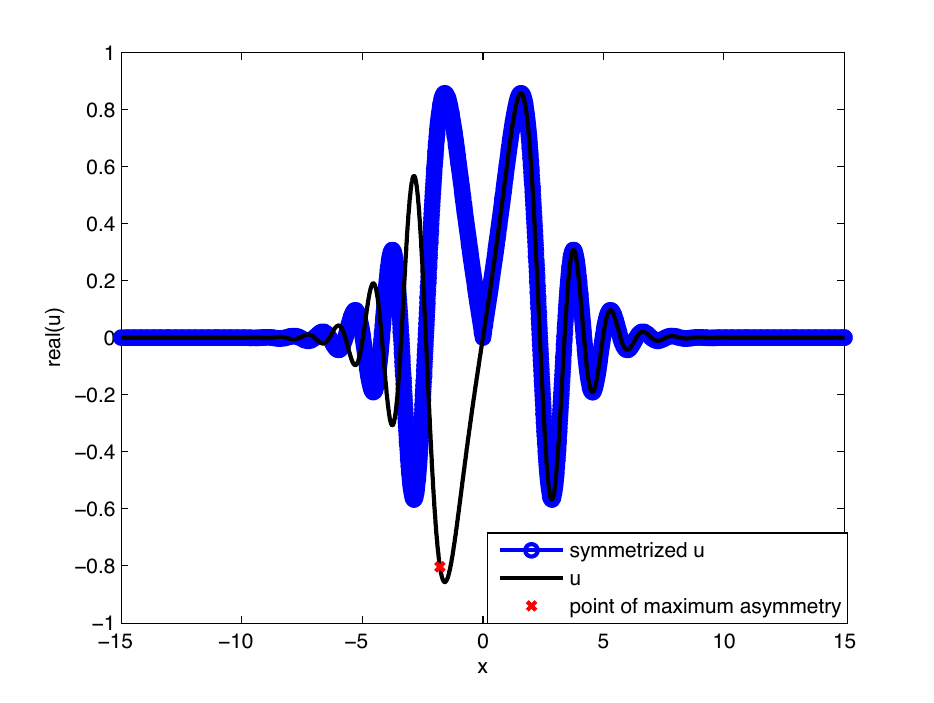}
\caption{Converged numerical soliton for $\mu = 1.0$, $L = 30.0$, $N = 1500$, antisymmetric initial iterate ($\alpha = 1.0$, $\gamma = 0.0$, $\beta = 0.0$).  Resulting Dispersion Managed Soliton has converged to bound state up to order at least $3e-4$, has numerical $L^2$ norm squared $3.135$ and is odd.}
\label{fig:dmsol2}
\end{figure}

\begin{figure}[!htbp]
\includegraphics[scale=0.3]{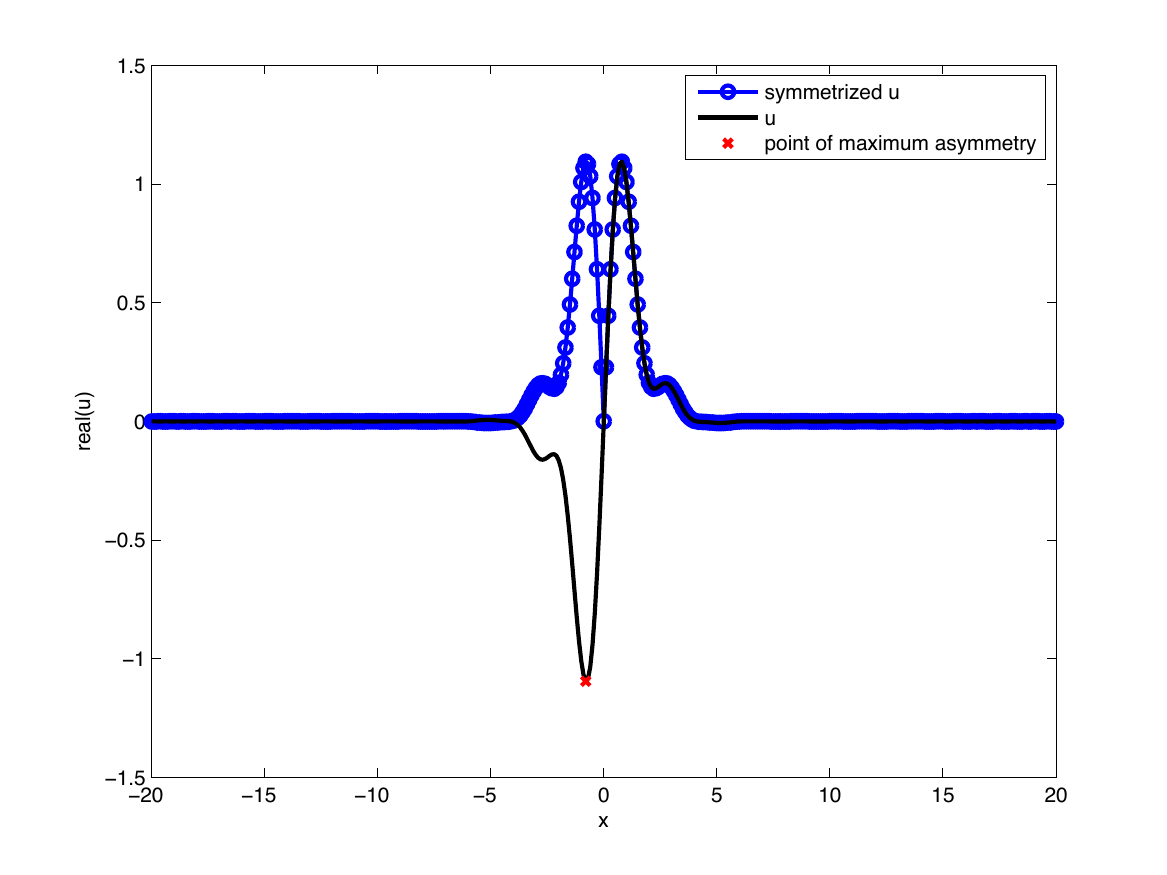}
\caption{Converged numerical soliton for $\mu = 1.0$, $L = 50.0$, $N = 2500$, antisymmetric initial iterate ($\alpha = 1.0$, $\gamma = 0.0$, $\beta = 0.0$).  Resulting Dispersion Managed Soliton has converged to bound state up to order at least $1e-6$, has numerical $L^2$ norm squared $2.3713$ and is odd.}
\label{fig:dmsol2alt}
\end{figure}

\begin{figure}[!htbp]
\includegraphics[scale=0.3]{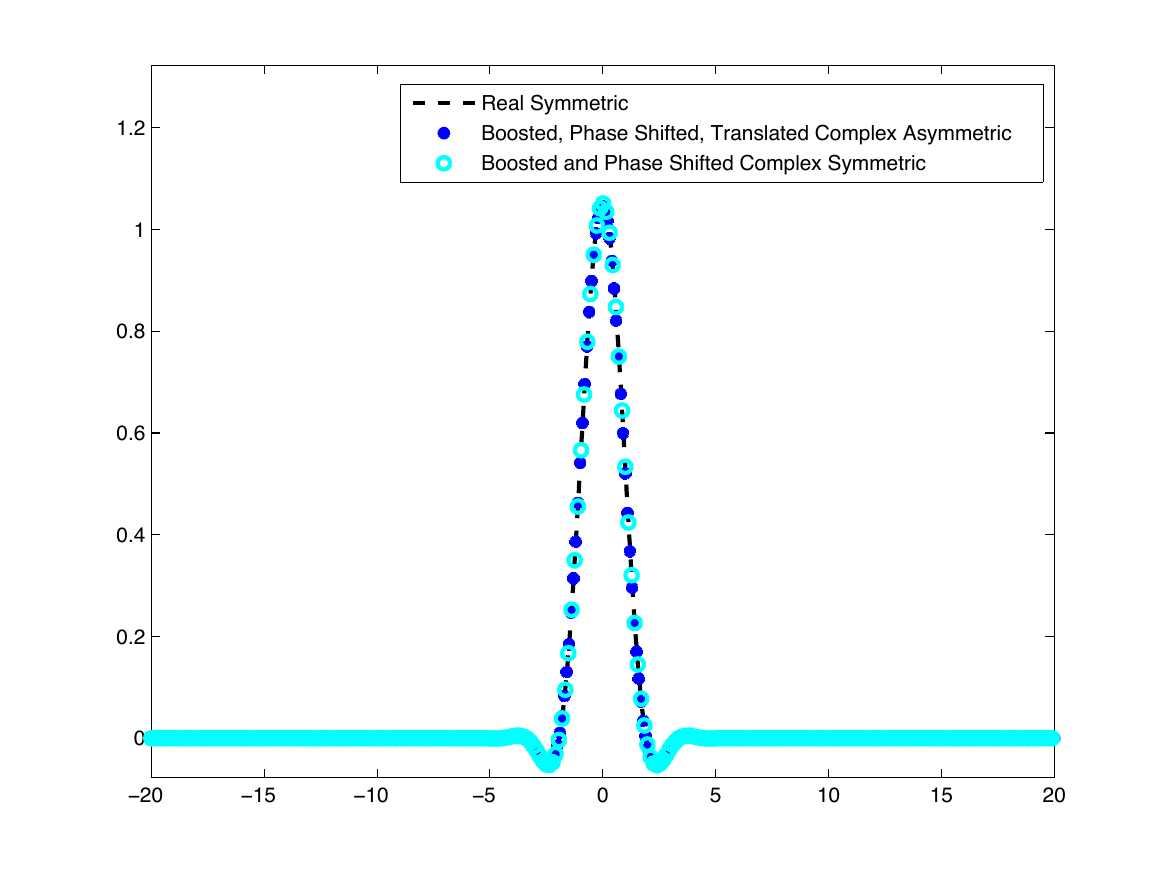}
\caption{Comparison of maximizers with real symmetric initial state, complex symmetric initial state and complex asymmetric initial state after removing spatial translation, frequency boosts and phase shifts.}
\label{fig:dmsol1_comp}
\end{figure}
\clearpage

\bibliographystyle{alpha}
\bibliography{HM-bib}

\end{document}